\newcommand{\pde}[2]{\ensuremath{\left\{\begin{array}{l}#1\vspace{0.2cm}\\#2\end{array}\right.}}
\newcommand{\R}{\ensuremath{\mathbb{R}}}
\newcommand{\C}{\ensuremath{\mathbb{C}}}
\newcommand{\re}{\ensuremath{\mathrm{Re}}}
\newcommand{\supp}{\ensuremath{\mathrm{supp}\,}}
\newcommand{\so}{\ensuremath{^{\{1\}}}}
\newcommand{\st}{\ensuremath{^{\{2\}}}}
\newcommand{\sk}{\ensuremath{^{\{i\}}}}
\newcommand{\sn}{\ensuremath{^{(n)}}}
\newcommand{\snp}{\ensuremath{^{(n+1)}}}
\newcommand{\snm}{\ensuremath{^{(n-1)}}}
\newcommand{\snmm}{\ensuremath{^{(n-2)}}}
\newcommand{\sm}{\ensuremath{^{(-1)}}}
\newcommand{\xk}{\ensuremath{\mathcal{X}}}
\newtheorem{thrm}{Theorem}[section]
\newtheorem{lem}[thrm]{Lemma}
\newtheorem{propn}[thrm]{Proposition}
\newtheorem{cor}[thrm]{Corollary}
\theoremstyle{remark}
\newtheorem{rk}[thrm]{Remark}
\numberwithin{equation}{section}
\title[Large data local well-posedness II]{Large data local well-posedness for a class of KdV-type equations II}
\author[B. Harrop-Griffiths]{Benjamin Harrop-Griffiths}
\address{Department of Mathematics, University of California, Berkeley, CA 94720}
\email{benhg@math.berkeley.edu}
\subjclass[2010]{Primary 35Q53, 35G25}
\begin{document}

\maketitle


\begin{abstract}
We consider the Cauchy problem for an equation of the form
\[
(\partial_t+\partial_x^3)u=F(u,u_x,u_{xx})
\]
where \(F\) is a polynomial with no constant or linear terms and no quadratic \(uu_{xx}\) term. For a polynomial nonlinearity with no quadratic terms, Kenig-Ponce-Vega proved local well-posedness in \(H^s\) for large \(s\). In this paper we prove local well-posedness in low regularity Sobolev spaces and extend the result to certain quadratic nonlinearities. The result is based on spaces and estimates similar to those used by Marzuola-Metcalfe-Tataru for quasilinear Schr\"odinger equations.
\end{abstract}


\section{Introduction}
We consider local well-posedness for the Cauchy problem
\begin{equation}\label{eq:pde}
\pde{(\partial_t+\partial_x^3)u=F(u,u_x,u_{xx})\qquad u\colon\R\times\R\rightarrow\R\textrm{ or }\C}{u(0)=u_0}
\end{equation}
where we assume \(F\) is a constant coefficient polynomial of degree \(m\geq 2\) with no constant or linear terms.

It is natural to consider well-posedness in \(H^s(\R)\). However, due to the infinite speed of propagation, even a linear equation
\[
(\partial_t+\partial_x^3+a(x)\partial_x^2)u=0
\]
where \(a\) is smooth with bounded derivatives requires a Mizohata-type necessary condition for \(L^2\) well-posedness \cite{A,Mz,Tam}
\begin{equation}\label{eq:Mz}
\sup_{x_1\leq x_2}\re\,\int_{x_1}^{x_2} a(x)\,dx<\infty
\end{equation}
So at the very least, when \(F\) contains a term of the form \(uu_{xx}\) we expect any solution \(u\) to \eqref{eq:pde} to require some additional integrability. Indeed, an ill-posedness result in \(H^s\) was proved by Pilod \cite{P}. Local well-posedness was established using weighted spaces \(H^s\cap L^2(|x|^kdx)\) for sufficiently large \(k\in\mathbb{Z}_+\) and \(s>0\) by Kenig-Ponce-Vega \cite{KPV1,KPV2} and in the case of systems by Kenig-Staffilani \cite{KS}. Several authors have considered quasilinear versions of the problem for which \eqref{eq:pde} is a special case (see \cite{A}, \cite{C} and references therein).

By replacing weighted spaces with a spatial summability condition, \linebreak Marzuola-Metcalfe-Tataru \cite{MMT} proved a small data result for quasilinear Schr\"odinger equations in a translation invariant subspace \(l^1H^s\subset H^s\) using a similar space to one suggested in \cite{KPV3}. In \cite{HG} the author adapted this result to \eqref{eq:pde} and using a similar method to Bejenaru-Tataru \cite{BT} was able to prove the result for large initial data.

As in \cite{KPV1, KPV2, KPV3, KS, MMT, MMT2} we expect to be able to consider initial data in \(H^s\) when \(F\) contains no quadratic terms. In fact, the Mizohata condition \eqref{eq:Mz} and ill-posedness results of Pilod \cite{P} suggest that we should be able to establish well-posedness provided \(F\) contains no quadratic \(uu_{xx}\) term. Our main result is that this is indeed the case.

\vspace{15pt}\begin{thrm}\label{thrm:main}Suppose \(F\) contains no terms of the form \(uu_{xx}\). Then for all \(s>\tfrac{9}{2}\) there exists some \(C>0\) such that the equation \eqref{eq:pde} is locally well-posed in \(H^s\) on the time interval \([0,T]\) where \(T=e^{-C\|u_0\|_{H^s}}\).\end{thrm}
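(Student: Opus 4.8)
The plan is to treat \eqref{eq:pde} as a genuinely quasilinear equation. Linearizing the right-hand side produces a variable-coefficient second-order term \(F_{u_{xx}}(u,u_x,u_{xx})\,\partial_x^2\), so the top-order part of the linearized operator has non-constant coefficients and a naive contraction in \(H^s\) cannot work. Following \cite{MMT} and the author's earlier paper \cite{HG}, I would first build a scale of frequency-localized function spaces out of three ingredients: the energy norm \(L^\infty_t H^s_x\), the local smoothing norm for the Airy group (which gains one derivative relative to \(L^2\)), and a maximal function norm controlling the \(L^\infty_x\) behaviour of the low-order coefficients; the forcing term is measured in the space dual to the local smoothing estimate. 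Granting these spaces, local well-posedness reduces to two families of bounds: a linear well-posedness estimate for the variable-coefficient operator \(\partial_t+\partial_x^3+a\,\partial_x^2+b\,\partial_x+c\), and a multilinear estimate placing \(F(u,u_x,u_{xx})\) in the dual space with a polynomial bound in the solution norm.

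For the linear estimate the crucial step is to eliminate the second-order coefficient \(a\) by a gauge transformation. Writing \(w=e^{\psi}v\) with \(\psi=\tfrac13\int^x a(y)\,dy\), the conjugated operator \(e^{\psi}(\partial_t+\partial_x^3+a\partial_x^2+\dots)e^{-\psi}\) has vanishing \(\partial_x^2\) coefficient, so it becomes a perturbation of the Airy operator to which the Kenig--Ponce--Vega local smoothing and maximal function estimates apply. The gauge \(e^{\pm\psi}\) is bounded on the function spaces precisely when the Mizohata quantity \(\sup_{x_1\le x_2}\re\int_{x_1}^{x_2}a\,dx\) in \eqref{eq:Mz} is finite, and the resulting operator norm is of size \(e^{C\|u_0\|_{H^s}}\). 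Balancing this growth against the time of existence is exactly what produces the lifespan \(T=e^{-C\|u_0\|_{H^s}}\).

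The multilinear step is where the hypothesis that \(F\) contains no \(uu_{xx}\) term is decisive, both for the product estimates and for verifying that the Mizohata condition actually holds. Since \(a=F_{u_{xx}}\) up to sign, every monomial of \(F\) that contains \(u_{xx}\) contributes to \(a\) a product of factors from \(\{u,u_x,u_{xx}\}\). Excluding \(uu_{xx}\) forces each such contribution to be either a perfect \(x\)-derivative --- the quadratic terms \(u_xu_{xx}\) and \(u_{xx}^2\) give coefficients \(u_x\) and \(2u_{xx}\), whose antiderivatives are controlled by \(\|u\|_{L^\infty}\) and \(\|u_x\|_{L^\infty}\) --- or, for cubic and higher monomials, a product of at least two field factors, which lies in \(L^1_x\) by Cauchy--Schwarz since \(u,u_x,u_{xx}\in L^2\). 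In every case the Mizohata quantity is finite and controlled by a polynomial in \(\|u_0\|_{H^s}\). The regularity threshold \(s>\tfrac92\) then emerges from the product estimates: \(u_{xx}\) costs two derivatives, the local smoothing space returns one, and the remaining factors must be placed in maximal function and \(L^\infty\)-based norms whose fractional cost, combined with the embedding \(H^s\hookrightarrow C^2\), forces \(s\) past \(\tfrac92\).

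With the linear and multilinear estimates in hand, I would obtain existence by an iteration scheme controlled through frequency-envelope bookkeeping in the style of \cite{BT}, uniqueness from an energy estimate for the difference of two solutions, and continuous dependence by a Bona--Smith approximation. I expect the linear step to be the main obstacle: the gauge must be bounded simultaneously on all three components of the function space, must cancel the \(\partial_x^2\) coefficient up to errors absorbable by the smoothing estimate, and must be tracked carefully enough to pin down the Mizohata constant governing the lifespan. The second delicate point is continuous dependence, where the quasilinear derivative loss rules out a Lipschitz solution map and forces the Bona--Smith argument rather than a direct fixed-point contraction.
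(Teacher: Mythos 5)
Your skeleton genuinely overlaps with the paper's: MMT-type local-smoothing spaces with a dual space for the forcing, an exponential gauge to cancel the second-order coefficient, and the correct identification of why excluding \(uu_{xx}\) matters (the quadratic coefficients \(u_x\), \(u_{xx}\) integrate to bounded quantities, making the gauge bounded with norm \(e^{C\|u_0\|_{H^s}}\)). But there is a concrete missing idea at the heart of the linear step: the \emph{time} derivative of the gauge. Conjugating \(\partial_t+\partial_x^3\) by \(e^{\psi}\) with \(\psi=\tfrac13\int^x a\) produces, besides the cancelled \(a\,\partial_x^2\), the term \(-(\partial_t\psi)w=-\tfrac13\bigl(\int^x a_t\bigr)w\), an \(x\)-antiderivative of \(a_t\). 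Since \(a\) is built from the solution and \(u_t\sim-u_{xxx}+\cdots\), this antiderivative carries two derivatives of \(u\) and is not "absorbable by the smoothing estimate"; it is controllable only because the combination \((\partial_t+\partial_x^3)a\) can be evaluated via the equation itself. In the paper this is precisely the hypothesis \eqref{cond:para2} on \(T_{(\partial_t+\partial_x^3)a}\), and verifying that this hypothesis propagates along the iteration (estimate \eqref{est:smallparanorm}) is a substantial part of Section \ref{sect:pf}; your proposal never confronts it. Relatedly, because \(a\) depends on \(u_{xx}\), a single global gauge loses derivatives whenever \(\partial_x\) lands on the high-frequency part of \(\psi\); the paper avoids this by a paradifferential decomposition, conjugating each frequency-localized block separately by \(e^{\frac13 a_{<j-4}}\) rather than by one global exponential.

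Two further points in your endgame would fail as stated. First, the lifespan \(T=e^{-C\|u_0\|_{H^s}}\) cannot come from "balancing the gauge norm against the time of existence": the function spaces are adapted to the unit interval, so shrinking time buys no smallness in them. The paper instead rescales \(u^{(k)}(t,x)=2^{\lambda k}u(2^{-3k}t,2^{-k}x)\) and splits off and freezes the low-frequency data following \cite{BT}, so that the high-frequency datum is \(2^{-\gamma k}\)-small by \eqref{est:rescaledHF}; taking \(k\sim\|u_0\|_{H^s}\) beats the exponential constants \(e^{C\|a\|_{l^2X^\sigma}}\) and the lifespan is \(T=2^{-3k}\). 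Without this reduction your iteration has no small parameter for large data. Second, your claim that the quasilinear derivative loss "rules out a Lipschitz solution map," forcing a Bona--Smith argument, contradicts the statement you are proving: well-posedness in Theorem \ref{thrm:main} is defined to \emph{include} Lipschitz continuity of \(u_0\mapsto u\), and the paper proves it directly via the difference estimates of Proposition \ref{propn:modAiryparadiffs} together with \eqref{est:Hdiff} and \eqref{est:paranormdiff}, the Lipschitz bound closing precisely because of the \(2^{-\frac12\gamma k}\) smallness supplied by the rescaling. A Bona--Smith approximation would deliver only continuous dependence and hence a strictly weaker theorem.
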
\vspace{15pt}

We take the definition of ``well-posedness" to be the existence and uniqueness of a solution \(u\in C([0,T],H^s(\R))\) to \eqref{eq:pde} and Lipschitz continuity of the solution map
\[
H^s\ni u_0\mapsto u\in C([0,T],H^s(\R))
\]

\vspace{15pt}\begin{rk}In the case that \(u\) is complex valued, we consider ``terms of the form \(uu_{xx}\)" to include the terms \(\overline uu_{xx},u\overline u_{xx},\overline u\overline u_{xx}\). In the proof of Theorem \ref{thrm:main} we will ignore complex conjugates, but it will be clear from the proof that \(F\) can be a polynomial in \(u,\overline u,u_x,\overline u_x,u_{xx},\overline u_{xx}\).\end{rk}\vspace{15pt}

The proof of Theorem \ref{thrm:main} is similar to \cite{HG} with key new ingredients being trilinear estimates similar to those proved in \cite{MMT2} and a linear estimate for a system of frequency localised equations. As our function spaces are adapted to the unit time interval, following Bejenaru-Tataru \cite{BT}, we split the data into low and high frequency components. The low frequency component of the initial data \(u_0^{l}\) is essentially stationary on the unit interval so we fix it at \(t=0\) and solve an equation for the high frequency part of the solution \(v=u-u_0^{l}\),
\[
\pde{(\partial_t+\partial_x^3)v=\tilde F(x,v,v_x,v_{xx})}{v(0)=u_0^{h}=u_0-u_0^l}
\]
By rescaling the initial data we can ensure the high frequency component of the initial data \(u_0^{h}\) is small and hence we can solve for \(v\) using a perturbative argument. The Mizohata condition \eqref{eq:Mz} suggests that the quadratic terms involving \(v_{xx}\) behave in a quasilinear manner. In order to handle this, we use a paradifferential decomposition similar to Marzuola-Metcalfe-Tataru \cite{MMT,MMT2} to break the equation into a system of frequency localised equations of the form
\begin{equation}\label{eq:introsystem}
(\partial_t+\partial_x^3-\partial_xa_{<j}\partial_x^2)v_j=f_j
\end{equation}
As in \cite{HG}, we solve this by conjugating the principal part by a suitable exponential term and find an approximate solution by solving a linear Airy equation.

The remainder of the paper is structured as follows. In Section \ref{sect:spaces} we define the function spaces used to prove Theorem \ref{thrm:main} and prove a number of estimates. In Section \ref{sect:lin} we prove estimates for the system of frequency localised equations \eqref{eq:introsystem}. In Section \ref{sect:rescale} we discuss the splitting of the initial data and rescaling. In Section \ref{sect:pf} we complete the proof of Theorem \ref{thrm:main}.

\vspace{15pt}
\begin{rk}While this result covers the case of the KdV, mKdV and gKdV, it is far from the best known results for these equations and we refer the reader to \cite{LinPon} for a summary of results and references.

However, as in \cite{HG}, even in the case of nonlinearities involving \(u_{xx}\) with which we are primarily concerned, we are able to relax the assumption \(s>\tfrac{9}{2}\) to \(s>s_0\) where \(s_0\) is determined by the structure of \(F\) as follows.

\vspace{15pt}\begin{center}\renewcommand{\arraystretch}{1.5}
\begin{tabular}{ c | c c }
\(\mathbf{s_0}\)&\multicolumn{2}{c}{\(\mathbf{F}\)\textbf{ contains terms of the form}}\\\hline
\(\tfrac{1}{2}\)&\(u^{\alpha_0}\)&\\\hline
\(1\)&\(u^{\alpha_0}u_x\)&\(\alpha_0\geq2\)\\\hline
\multirow{2}{*}{\(\tfrac{3}{2}\)}&\(u^{\alpha_0}u_x^{\alpha_1}\)&\(\alpha_0\geq1\)\\
&\(u^{\alpha_0}u_x^{\alpha_1}u_{xx}\)&\(\alpha_0\geq2\)\\\hline
\(2\)&\(u_x^{\alpha_1}\)&\(\alpha_1\geq3\)\\\hline
\(\tfrac{5}{2}\)&\(u^{\alpha_0}u_x^{\alpha_1}u_{xx}^{\alpha_2}\)&\(\alpha_0+\alpha_1\geq2\)\\\hline
\(\tfrac{7}{2}\)&\(u^{\alpha_0}u_x^{\alpha_1}u_{xx}^{\alpha_2}\)&\(\alpha_0+\alpha_1+\alpha_2\geq3\)\\\hline
\(\tfrac{9}{2}\)&\(u_{xx}^{\alpha_2}\)&\\
\end{tabular}\end{center}\vspace{15pt}

A key difference to \cite{HG} is that by taking initial data in \(H^s\) rather than \(l^1H^s\) we do not have additional restrictions imposed by rescaling the initial data.\end{rk}


\section{Function spaces and estimates}\label{sect:spaces}


\subsection{Definitions}

We take a standard Littlewood-Paley decomposition
\[
1=\sum\limits_{j=0}^\infty S_j
\]
constructed by taking smooth \(\varphi_0\colon\mathbb{R}\rightarrow[0,1]\) such that
\[
\varphi_0(\xi)=\left\{\begin{array}{ll}1&\qquad\textrm{for }\xi\in[-1,1]\vspace{0.2cm}\\0&\qquad\textrm{for }|\xi|\geq2\end{array}\right.
\]
We then define, for \(j>0\)
\[
\varphi_j(\xi)=\varphi_0(2^{-j}\xi)-\varphi_0(2^{-j+1}\xi)
\]
and
\[
f_j=S_jf=\mathcal{F}^{-1}(\varphi_j\hat f)
\]
where \(\mathcal{F}u=\hat u\) is the spatial Fourier transform. We also use the notation
\[
f_{<j}=S_{<j}f=\sum\limits_{k<j}S_kf\qquad f_{\geq j}=S_{\geq j}f=\sum\limits_{k\geq j}S_kf
\]
Given a Fourier multiplier \(S_j\) that localises to frequencies \(\sim 2^j\) we define \(\tilde S_j\) to be a Fourier multiplier that localises to frequencies \(\sim 2^j\) and satisfies \(S_j\tilde S_j=\tilde S_j S_j=S_j\).

For each \(j\geq 0\) we take \(\mathcal{Q}_{2j}\) to be a partition of \(\R\) into intervals of length \(2^{2j}\) and
\[
1=\sum\limits_{Q\in\mathcal{Q}_{2j}}\chi_Q^2
\]
to be a smooth square partition of unity such that \(\chi_Q\sim1\) on \(Q\) and \(\supp\chi_Q\subset B\left(Q,\tfrac{1}{2}\right)\). For a Sobolev-type space \(U\) we define
\[
\|u\|_{l^2_{2j}U}^2=\sum\limits_{Q\in\mathcal{Q}_{2j}}\|u\chi_Q\|_U^2
\]
and
\[
\|u\|_{l^\infty_{2j}U}=\sup\limits_{Q\in\mathcal{Q}_{2j}}\|u\chi_Q\|_U
\]
We define the \(l^2H^s\) norm by
\[
\|u\|_{l^2H^s}^2=\sum\limits_{j\geq0}2^{2js}\|S_ju\|^2_{l^2_{2j}L^2}
\]
and note that \(\|u\|_{H^s}\sim\|u\|_{l^2H^s}\).

By replacing the partition of unity by a frequency localised version we have a Bernstein-type inequality for \(r\in[1,\infty]\) and \(1\leq p\leq q\leq\infty\)
\begin{equation}\label{est:bernstein}
\|S_ju\|_{l^2_{2j}L^r_tL^q_x}\lesssim2^{j\left(\tfrac{1}{p}-\tfrac{1}{q}\right)}\|S_ju\|_{l^2_{2j}L^r_tL^p_x}
\end{equation}

We define the local energy space \(X\) (see \cite{KPV3} Remark 3.7) with norm
\[
\|u\|_X=\sup\limits_{l\geq0}\sup\limits_{Q\in\mathcal{Q}_l}2^{-l/2}\|u\|_{L^2([0,1]\times Q)}
\]
and look for solutions in the space \(l^2X^s\subset C([0,1],H^s)\) with norm
\[
\|u\|_{l^2X^s}^2=\sum\limits_{j\geq0}2^{2js}\|S_ju\|_{l^2_{2j}X_j}^2
\]
where
\[
\|u\|_{X_j}=2^j\|u\|_X+\|u\|_{L^\infty_tL^2_x}
\]
We note that we have the low frequency estimate
\begin{equation}\label{est:LFXH}
\|S_0u\|_{l^2X^s}\lesssim\|S_0u\|_{l^2L^\infty_tH^s_x}
\end{equation}

We define the atomic space \(Y\) such that \(Y^*=X\) (see \cite{MMT} Proposition 2.1) with atoms \(a\) such that there exists some \(l\geq0\), \(Q\in\mathcal{Q}_l\) with \(\supp a\subset [0,1]\times Q\) and
\[
\|a\|_{L^2([0,1]\times Q)}\lesssim 2^{-l/2}
\]
and with norm
\[
\|f\|_{Y}=\inf\left\{\sum|c_k|:f=\sum c_ka_k,\;a_k\textrm{ atoms}\right\}
\]
We define
\[
\|f\|_{Y_j}=\inf\limits_{f=f_1+f_2}\left(2^{-j}\|f_1\|_{Y}+\|f_2\|_{L^1_tL^2_x}\right)
\]
and
\[
\|f\|_{l^2Y^s}^2=\sum\limits_{j\geq0}2^{2js}\|S_jf\|^2_{l^2_{2j}Y_j}
\]

\subsection{Bilinear estimates}

\vspace{15pt}\begin{propn}\label{propn:bil}~

a) (Algebra estimates) For \(s>\frac{1}{2}\),
\begin{equation}\label{est:Halg}
\|uv\|_{l^2H^s}\lesssim\|u\|_{l^2H^s}\|v\|_{l^2H^s}
\end{equation}
\begin{equation}\label{est:alg}
\|uv\|_{l^2X^s}\lesssim\|u\|_{l^2X^s}\|v\|_{l^2X^s}
\end{equation}

b) (Bilinear estimate) For \(\alpha+\beta>s+\tfrac{1}{2}\) and \(\alpha,\beta\geq s-1\),
\begin{equation}\label{est:bil}
\|uv\|_{l^2Y^s}\lesssim\|u\|_{l^2X^\alpha}\|v\|_{l^2X^\beta}
\end{equation}

c) (Frequency localised bilinear estimates I) For \(\alpha+\beta>s+\tfrac{1}{2}\)
\begin{equation}\label{est:bilLH}
\|S_{<j-4}uS_jv\|_{l^2Y^s}\lesssim\|u\|_{l^2X^{\alpha}}\|S_jv\|_{l^2X^\beta}\quad\beta\geq s-1
\end{equation}
\begin{equation}\label{est:bilHH}
\|S_j(S_{\geq j-4}uS_{\geq j-4}v)\|_{l^2Y^s}\lesssim2^{(s+\tfrac{1}{2}-\alpha-\beta)j}\|u\|_{l^2X^{\alpha}}\|v\|_{l^2X^\beta}\quad \alpha,\beta\geq0
\end{equation}

d) (Frequency localised bilinear estimates II) For \(s\geq0\) and \(\sigma>\tfrac{1}{2}\)
\begin{equation}\label{est:LHbilH}
\|S_{<j-4}uS_jv\|_{l^2H^s}\lesssim\|u\|_{l^2H^\sigma}\|S_jv\|_{l^2H^s}
\end{equation}
\begin{equation}\label{est:LHbilX}
\|S_{<j-4}uS_jv\|_{l^2X^s}\lesssim\|u\|_{l^2X^\sigma}\|S_jv\|_{l^2X^s}
\end{equation}
\begin{equation}\label{est:HHbilX}
\|S_j(S_{\geq j-4}uS_{\geq j-4}v)\|_{l^2X^s}\lesssim 2^{-j/2}\|u\|_{l^2X^\sigma}\|v\|_{l^2X^\sigma}
\end{equation}
\begin{equation}\label{est:LHbilY}
\|S_{<j-4}uS_jv\|_{l^2Y^s}\lesssim\|u\|_{l^2X^\sigma}\|S_jv\|_{l^2Y^s}
\end{equation}

\end{propn}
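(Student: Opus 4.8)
The plan is to reduce all the estimates to their frequency-localized building blocks via the Littlewood-Paley trichotomy, writing \(uv = \pi_1 + \pi_2 + \pi_3\) where \(\pi_1 = \sum_j S_{<j-4}u\,S_jv\) and \(\pi_2\) (its symmetric counterpart) are the low-high paraproducts and \(\pi_3 = \sum_j S_j(S_{\geq j-4}u\,S_{\geq j-4}v)\) is the high-high term. With this decomposition the bilinear estimate \eqref{est:bil} of part (b) is an immediate consequence of the two frequency-localized estimates \eqref{est:bilLH} and \eqref{est:bilHH} of part (c): since each summand of \(\pi_1\) is localized at output frequency \(\sim 2^j\), square-summing \eqref{est:bilLH} in \(j\) reproduces \(\|v\|_{l^2X^\beta}\) on the right, while square-summing \eqref{est:bilHH} converges precisely because the strict inequality \(\alpha+\beta>s+\tfrac12\) makes the prefactor \(2^{(s+\tfrac12-\alpha-\beta)j}\) summable. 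Thus the heart of the matter is parts (c) and (d) together with the algebra estimates of part (a), which I would establish by the same trichotomy but mapping into the stronger target norms \(l^2X^s\) and \(l^2H^s\) rather than \(l^2Y^s\).

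For the low-high pieces one exploits that \(S_{<j-4}u\,S_jv\) is localized at frequency \(\sim 2^j\), so only the high factor \(S_jv\) carries the derivative weight. The low factor is placed in \(L^\infty\): for the targets \(l^2H^s\) and \(l^2X^s\) of \eqref{est:LHbilH}, \eqref{est:LHbilX} (and the algebra estimates \eqref{est:Halg}, \eqref{est:alg}) this is Bernstein \eqref{est:bernstein} followed by Sobolev embedding, which is exactly where \(\sigma>\tfrac12\), resp. \(s>\tfrac12\), is used; for the \(Y\)-valued targets \eqref{est:bilLH}, \eqref{est:LHbilY} one instead controls the low factor in the local energy norm \(X\). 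Because \(S_{<j-4}u\) varies slowly relative to the cube scale \(2^{2j}\), its \(L^\infty\)-type bound factors out of the \(l^2_{2j}\) sum, leaving \(S_jv\) in its natural norm; the low-frequency block \(S_0\) is absorbed separately using \eqref{est:LFXH}.

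The high-high term \(\pi_3\) is where the local-smoothing gain is decisive. Here the two inputs share a common frequency \(2^k\) with \(k\geq j-4\), and I would estimate on cubes of length \(2^{2k}\) matched to the Airy group velocity, on which the weight \(2^{-l/2}=2^{-k}\) built into the local energy norm \(X\) supplies the smoothing; combining this with Bernstein \eqref{est:bernstein} to trade one factor from \(L^2\) to \(L^\infty\) produces the prefactor \(2^{(s+\tfrac12-\alpha-\beta)j}\) of \eqref{est:bilHH} and the \(2^{-j/2}\) of \eqref{est:HHbilX}, the internal sum over \(k\geq j-4\) converging under the stated \(\alpha,\beta\geq0\) (resp. \(\sigma>\tfrac12\)).

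The main obstacle I anticipate lies in the \(Y\)-valued estimates \eqref{est:bil}, \eqref{est:bilLH}, \eqref{est:bilHH} and \eqref{est:LHbilY}. The cleanest route is duality: the canonical isometric embedding into the bidual together with \(Y^*=X\) (and, at the frequency-localized level, the preduality of \(Y_j\) and \(X_j\)) lets one bound \(\|uv\|_{l^2Y^s}\) by pairing \(uv\) against a test function \(w\) in the unit ball of \(l^2X^{-s}\), reducing each estimate to a trilinear space-time integral \(\bigl|\iint uvw\,dx\,dt\bigr|\) to be controlled by Hölder in the local energy norms. The delicate point is then the bookkeeping of mismatched spatial scales: the three factors are adapted to cube partitions of different sizes, so one must refine to the finest partition and exploit the bounded overlap of the coarser cubes, together with the almost-orthogonality of the Littlewood-Paley pieces, to recombine the \(l^2_{2j}\) sums. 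Verifying that the smoothing weight \(2^{-l/2}\) is genuinely available on \emph{both} explicit factors as well as the dual test function---so that the frequency sums close under exactly the stated regularity thresholds---is the crux of the argument.
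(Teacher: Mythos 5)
Your overall architecture --- the Littlewood--Paley trichotomy, the low factor placed in \(L^\infty\) via Bernstein (which is exactly where \(s,\sigma>\tfrac12\) enter), local smoothing exploited at the scale of the highest frequency in the high-high pieces, and summation closed by \(\alpha+\beta>s+\tfrac12\) --- matches the paper, and your numerology is right. The genuine divergence is in how you access the \(Y\)-norms. You dualize, invoking \(Y^*=X\) and the isometric embedding into the bidual to reduce \eqref{est:bil}, \eqref{est:bilLH}, \eqref{est:bilHH} to trilinear space-time integrals. The paper never dualizes in this proposition: it uses the elementary embedding \(\|f\|_{l^2_{2k}Y_k}\lesssim\|f\|_{l^2_{2j}L^2_{t,x}}\), valid for \emph{every} cube scale \(2^{2j}\) (on the unit time interval \(L^2_t\subset L^1_t\), and the \(l^2_{2j}L^2_{t,x}\) norms agree with \(L^2_{t,x}\) by the square partition of unity), places the product directly in \(L^2\) on cubes matched to the \emph{low} frequency, and applies H\"older with the high factor in \(l^\infty L^2_{t,x}\) on those small cubes --- bounded by \(2^{j_{\mathrm{low}}-j_{\mathrm{high}}}\) times its \(X_{j_{\mathrm{high}}}\)-norm, which is precisely the one-derivative gain making \(\beta\geq s-1\) admissible --- and the low factor in \(l^2L^\infty\) by Bernstein. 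The freedom to choose the measuring scale inside this embedding dissolves the mismatched-scale bookkeeping you single out as the crux; note also that the smoothing weight is needed on only one factor, the dual test function in your setup being handled by the \(L^\infty_tL^2_x\) component of its \(X_j\) norm. Duality of exactly your kind, \((l^2_{2l}Y_l)^*=l^2_{2l}X_l\), is what the paper deploys for the trilinear estimates of Proposition \ref{propn:tri} via Lemma \ref{lem:tri-est}, so your route amounts to running the trilinear machinery one proposition early; it is viable, just heavier than needed.

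Two concrete corrections. First, in the low-high \(Y\)-estimates the smoothing weight must be spent on the \emph{high} factor, not the low one as you state: in \eqref{est:bilLH} the gain comes from measuring \(S_jv\) in \(L^2_{t,x}\) on cubes of side \(2^{2i}\) adapted to each piece \(S_iu\) of the low factor, giving \(\|S_jv\|_{l^\infty_{2i}L^2_{t,x}}\lesssim 2^{i-j}\|S_jv\|_{X_j}\); controlling the low factor in the local energy norm yields no derivative gain and cannot deliver \(\beta\geq s-1\). Second, \eqref{est:LHbilY} does not reduce to a trilinear local-energy H\"older estimate at all: there the high factor is controlled only in \(Y\), an atomic space with no usable \(L^2\)-smoothing bound, so your pairing leaves you with \(\langle S_jv,\,S_{<j-4}u\,w\rangle\) and you must instead prove a multiplier bound --- multiplication by the (Bernstein-bounded) \(L^\infty\) function \(S_{<j-4}u\) is bounded on \(X_j\), equivalently it maps \(Y_j\) atoms to multiples of atoms and preserves \(L^1_tL^2_x\). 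This is what the paper means when it calls the proof of \eqref{est:LHbilY} ``identical'' to that of \eqref{est:LHbilH}.
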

\begin{proof}

a) The estimate \eqref{est:Halg} follows from the fact that the \(H^s\) and \(l^2H^s\) norms are equivalent.

For \eqref{est:alg} we use the Littlewood-Paley trichotomy and consider terms of the form \(S_k(S_iuS_jv)\).

\textbf{High-low interactions.} \(|i-k|< 4\), \(j<k-4\). Using Bernstein's inequality \eqref{est:bernstein} we have
\begin{align*}
\|S_k(S_iuS_jv)\|_{l^2_{2k}X_k}&\lesssim\|S_iu\|_{l^2_{2i}X_i}\|S_jv\|_{L^{\infty}_{t,x}}\\
&\lesssim 2^{j/2}\|S_iu\|_{l^2_{2i}X_i}\|S_jv\|_{L^\infty_tL^2_x}
\end{align*}
The symmetric low-high interaction is similar.

\textbf{High-high interactions.} \(|i-j|\leq 4\), \(i,j\geq k-4\). Using Bernstein's inequality \eqref{est:bernstein}, Cauchy-Schwarz and switching interval size we have
\begin{align*}
\|S_k(S_iuS_jv)\|_{l^2_{2k}X_k}&\lesssim2^{k/2}\|S_iu\|_{l^2_{2k}X_k}\|S_jv\|_{L^\infty_tL^2_x}\\
&\lesssim2^{i-k/2}\|S_iu\|_{l^2_{2i}X_i}\|S_jv\|_{L^\infty_tL^2_x}
\end{align*}

b) We note that for all \(j\)
\[
\|f\|_{l^2_{2k}Y_k}\lesssim\|f\|_{l^2_{2j}L^2_{t,x}}
\]

\textbf{High-low interactions.} \(|i-k|< 4\), \(j<k-4\).
\begin{align*}
\|S_k(S_iuS_jv)\|_{l^2_{2k}Y_k}&\lesssim\|S_iu\|_{l^\infty_{2j}L^2_{t,x}}\|S_jv\|_{l^2_{2j}L^\infty_{t,x}}\\
&\lesssim2^{\tfrac{3}{2}j}2^{-i}\|S_iu\|_{X_i}\|S_jv\|_{l^2_{2j}L^\infty_tL^2_x}
\end{align*}
The symmetric low-high interaction is similar.

\textbf{High-high interactions.} \(|i-j|\leq 4\), \(i,j\geq k-4\).
\begin{align*}
\|S_k(S_iuS_jv)\|_{l^2_{2k}Y_k}&\lesssim\|S_k(S_iuS_jv)\|_{l^2_{2j}L^2_{t,x}}\\
&\lesssim2^{k/2}\|S_iuS_jv\|_{l^2_{2j}L^2_tL^1_x}\\
&\lesssim2^{k/2}\|S_iu\|_{l^\infty_{2j}L^2_{t,x}}\|S_jv\|_{l^2_{2j}L^\infty_tL^2_x}\\
&\lesssim2^{k/2}\|S_iu\|_{X_i}\|S_jv\|_{l^2_{2j}L^\infty_tL^2_x}
\end{align*}

c) The estimates \eqref{est:bilLH}, \eqref{est:bilHH} follow from the proof of part (b).

d) For \eqref{est:LHbilH} we take \(i<k-4\), \(|j-k|<4\) and consider
\begin{align*}
\|S_k(S_iuS_jv)\|_{l^2_{2k}L^2}&\lesssim\|S_iu\|_{L^\infty}\|S_jv\|_{l^2_{2k}L^2}\\
&\lesssim2^{i/2}\|S_iu\|_{L^2}\|S_jv\|_{l^2_{2k}L^2}
\end{align*}
The estimates \eqref{est:LHbilX}, \eqref{est:LHbilY} are identical. The estimate \eqref{est:HHbilX} follows from the proof of \eqref{est:alg}.

\end{proof}
\vspace{15pt}

As a consequence of the algebra estimates \eqref{est:Halg}, \eqref{est:alg} and the bilinear estimates \eqref{est:LHbilH}, \eqref{est:LHbilX} and \eqref{est:LHbilY}, we have the following corollary.

\vspace{15pt}\begin{cor}\label{cor:exptype}~

a) (Estimates with an exponential) For \(s>\tfrac{1}{2}\)
\begin{equation}\label{est:expH}
\|e^au\|_{l^2H^s}\leq e^{C\|a\|_{l^2H^s}}\|u\|_{l^2H^s}
\end{equation}
\begin{equation}\label{est:expX}
\|e^au\|_{l^2X^s}\leq e^{C\|a\|_{l^2X^s}}\|u\|_{l^2X^s}
\end{equation}

b) (Frequency localised estimates with an exponential) For \(s\geq0\) and \(\sigma>\tfrac{1}{2}\)
\begin{equation}\label{est:FLexpH}
\|S_{<j-4}(e^a)S_ju\|_{l^2H^s}\leq e^{C\|a\|_{l^2H^\sigma}}\|S_ju\|_{l^2H^s}
\end{equation}
\begin{equation}\label{est:FLexpX}
\|S_{<j-4}(e^a)S_ju\|_{l^2X^s}\leq e^{C\|a\|_{l^2X^\sigma}}\|S_ju\|_{l^2X^s}
\end{equation}
\begin{equation}\label{est:FLexpY}
\|S_{<j-4}(e^a)S_jf\|_{l^2Y^s}\leq e^{C\|a\|_{l^2X^\sigma}}\|S_jf\|_{l^2Y^s}
\end{equation}

\end{cor}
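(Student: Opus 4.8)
The plan is to expand the exponential as a power series $e^a=\sum_{n=0}^\infty\frac{a^n}{n!}$, estimate each term by iterating the bilinear estimates already established in Proposition \ref{propn:bil}, and then sum the resulting series into an exponential. The only mechanism needed is that the combinatorial weight $\frac1{n!}$ beats the geometric growth of the constants picked up by repeatedly applying an algebra or bilinear estimate.

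For part (a), let $C_0$ denote the constant in the algebra estimate \eqref{est:Halg}, so that $\|uv\|_{l^2H^s}\leq C_0\|u\|_{l^2H^s}\|v\|_{l^2H^s}$. Iterating gives $\|a^n\|_{l^2H^s}\leq C_0^{n-1}\|a\|_{l^2H^s}^n$ and hence $\|a^nu\|_{l^2H^s}\leq C_0^n\|a\|_{l^2H^s}^n\|u\|_{l^2H^s}$. Applying the triangle inequality to the series and summing then yields
\[
\|e^au\|_{l^2H^s}\leq\sum_{n=0}^\infty\frac{C_0^n}{n!}\|a\|_{l^2H^s}^n\|u\|_{l^2H^s}=e^{C_0\|a\|_{l^2H^s}}\|u\|_{l^2H^s},
\]
which is \eqref{est:expH} with $C=C_0$. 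The estimate \eqref{est:expX} is identical after replacing \eqref{est:Halg} by \eqref{est:alg}.

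For part (b), the expansion gives $S_{<j-4}(e^a)=\sum_{n=0}^\infty\frac1{n!}S_{<j-4}(a^n)$, so that $S_{<j-4}(e^a)S_ju=\sum_{n=0}^\infty\frac1{n!}S_{<j-4}(a^n)S_ju$. For each $n\geq1$ I would apply the frequency localised bilinear estimate \eqref{est:LHbilH} with first factor $a^n$, giving $\|S_{<j-4}(a^n)S_ju\|_{l^2H^s}\leq C_1\|a^n\|_{l^2H^\sigma}\|S_ju\|_{l^2H^s}$, and then control $\|a^n\|_{l^2H^\sigma}\leq C_0^{n-1}\|a\|_{l^2H^\sigma}^n$ by the algebra estimate in $H^\sigma$, which is available precisely because $\sigma>\tfrac12$. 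The $n=0$ term contributes $\|S_ju\|_{l^2H^s}$. Summing, the total coefficient is $1+\tfrac{C_1}{C_0}\bigl(e^{C_0\|a\|_{l^2H^\sigma}}-1\bigr)$, which is dominated by $e^{C\|a\|_{l^2H^\sigma}}$ once $C$ is taken large enough, giving \eqref{est:FLexpH}. The estimates \eqref{est:FLexpX} and \eqref{est:FLexpY} follow from the same template, using \eqref{est:LHbilX} (respectively \eqref{est:LHbilY}) together with the $X^\sigma$ algebra estimate \eqref{est:alg} to bound the powers $\|a^n\|_{l^2X^\sigma}$.

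The main point requiring care is bookkeeping of constants rather than any genuine analytic difficulty: one must track that the factorial always beats the accumulated product $C_0^n$ of constants from the iterated bilinear estimates, and verify that the auxiliary constant $C_1$ from the frequency localised estimate can be absorbed by enlarging $C$ in the exponential. The one structural subtlety worth flagging is that in part (b) the powers of $a$ on the right must be measured in the weaker $\sigma$ norm forced by the low-frequency factor in \eqref{est:LHbilH}--\eqref{est:LHbilY}, not in the $H^s$ or $X^s$ norm; this is exactly why the hypothesis $\sigma>\tfrac12$, ensuring the algebra property for that norm, is the natural one in part (b).
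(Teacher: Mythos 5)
Your proof is correct and follows exactly the route the paper intends: the paper states this corollary without proof as ``a consequence of'' the algebra estimates \eqref{est:Halg}, \eqref{est:alg} and the frequency localised bilinear estimates \eqref{est:LHbilH}, \eqref{est:LHbilX}, \eqref{est:LHbilY}, which is precisely your power-series expansion of \(e^a\) with iterated algebra bounds on \(\|a^n\|\) and the factorial absorbing the accumulated constants. Your closing observation --- that in part (b) the powers of \(a\) must be measured in the \(\sigma\) norm appearing in the low-frequency slot of \eqref{est:LHbilH}--\eqref{est:LHbilY}, which is why \(\sigma>\tfrac{1}{2}\) is the natural hypothesis --- is exactly the right structural point.
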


\subsection{Trilinear estimates}

\vspace{15pt}\begin{propn}\label{propn:tri} Suppose \(\alpha+\beta+\gamma>s+1\).

a) (Trilinear Estimate) If \(\alpha,\beta,\gamma\geq s-2\) and \(\alpha+\beta,\beta+\gamma,\gamma+\alpha>s-\frac{1}{2}\) then,
\begin{equation}\label{est:tri}
\|uvw\|_{l^2Y^s}\lesssim\|u\|_{l^2X^\alpha}\|v\|_{l^2X^\beta}\|w\|_{l^2X^\gamma}
\end{equation}

b) (Frequency Localised Trilinear Estimate) If \(j\leq k\), \(\gamma\geq s-2\) and \(\beta+\gamma>s-\frac{1}{2}\) then,
\begin{equation}\label{est:triLHH}
\|S_{<j-4}uS_jvS_kw\|_{l^2Y^s}\lesssim\|u\|_{l^2X^\alpha}\|S_jv\|_{l^2X^\beta}\|S_kw\|_{l^2X^\gamma}
\end{equation}
\end{propn}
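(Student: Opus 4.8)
The plan is to reduce the trilinear estimates to the bilinear machinery already established in Proposition~\ref{propn:bil}, treating the product of three functions as an iterated pair of binary products. Since both the $l^2 Y^s$ target norm and the $l^2 X$ source norms are defined via Littlewood--Paley pieces, I would begin by performing a full Littlewood--Paley trichotomy on the triple product, writing
\[
uvw=\sum_{i,j,k} S_i u\,S_j v\,S_k w
\]
and splitting according to which frequency is largest. By symmetry of the full estimate \eqref{est:tri} in $u,v,w$ it suffices to treat the case where the output frequency is comparable to the maximum of $i,j,k$, and to track carefully which of the two remaining inputs is the ``second largest.'' Each resulting term is a frequency-localised triple product in which at least one factor is at low frequency relative to the output; grouping the two lowest-frequency factors together lets me apply a frequency-localised bilinear estimate to that pair and then combine with the high-frequency factor.

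\textbf{Part (b).} I would prove the frequency-localised estimate \eqref{est:triLHH} first, since it is the workhorse and part (a) should follow by summation. With $S_{<j-4}u$ genuinely at low frequency and $S_jv, S_kw$ at frequencies $2^j\le 2^k$, the product $S_jv\,S_kw$ is a high-high or high-low interaction of two frequency-localised factors. The strategy is to first multiply $S_{<j-4}u$ into $S_jv$ using the low-high bilinear bound \eqref{est:bilLH} (which gains nothing but costs nothing, requiring only $\alpha+\beta>s+\tfrac12$ type control), or alternatively to first form $S_jv\,S_kw$ and then multiply by the low-frequency factor. Concretely I would bound the pair $S_jv\,S_kw$ in an $l^2X^{s'}$ norm for an intermediate exponent $s'$ chosen so that \eqref{est:bilHH} or \eqref{est:bilLH} applies, then apply the low-high bilinear-into-$Y$ estimate \eqref{est:bilLH} (or \eqref{est:LHbilY}) to pair this with $S_{<j-4}u$, landing in $l^2Y^s$. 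The hypotheses $\gamma\ge s-2$ and $\beta+\gamma>s-\tfrac12$ are exactly the conditions needed to close the two-step bilinear application: $\beta+\gamma>s-\tfrac12$ governs the $vw$ pairing and the constraint that the combined frequency interaction not lose regularity, while $\gamma\ge s-2$ ensures the high-frequency factor $S_kw$ can absorb the derivative loss when $k$ is the maximum.

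\textbf{Part (a).} For the full estimate \eqref{est:tri}, I would sum the frequency-localised pieces over the dyadic decomposition. After the trichotomy there are essentially three regimes up to symmetry: low-low-high, low-high-high (with the two high frequencies comparable), and the degenerate cases. In the low-high-high regime I invoke \eqref{est:triLHH} directly; in the remaining regimes I pair the two comparable high frequencies via the high-high bilinear estimate \eqref{est:bilHH}, which supplies the crucial factor $2^{(s+\frac12-\alpha-\beta)j}$ needed for dyadic summability, then multiply in the low-frequency factor. The three symmetric conditions $\alpha+\beta,\beta+\gamma,\gamma+\alpha>s-\tfrac12$ ensure that whichever pair ends up being the two largest frequencies, the corresponding bilinear estimate closes; the single condition $\alpha+\beta+\gamma>s+1$ provides the summability over the output frequency (one gains $\tfrac12$ from the $Y$-space structure relative to the naive $H^s$ count).

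\textbf{Main obstacle.} The delicate point will be the high-high-high interaction where all three input frequencies are comparable and the output frequency is much lower --- here the trichotomy must be handled by hand rather than by a single bilinear application, and I expect to need Bernstein's inequality \eqref{est:bernstein} together with the interval-size switching already used in the proof of \eqref{est:alg} and \eqref{est:bil} to recover the $2^{k/2}$-type gains. Verifying that the exponent bookkeeping in this regime is controlled by $\alpha+\beta+\gamma>s+1$ (and does not secretly require a stronger hypothesis) is the step I would check most carefully, as it is where the three separate pairwise conditions interact simultaneously.
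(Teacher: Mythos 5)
Your reduction to iterated bilinear estimates does not close in the stated range of exponents, and the obstruction is not only in the high-high-high regime you flagged. Consider first part (b): your plan is to bound the pair \(S_jv\,S_kw\) in an intermediate norm and then multiply by the low-frequency factor. If the intermediate bound is in \(l^2Y\), the pair estimates \eqref{est:bilLH}, \eqref{est:bilHH} require \(\beta+\gamma>s+\tfrac12\), whereas the proposition claims the result under the weaker \(\beta+\gamma>s-\tfrac12\) --- your route loses a full derivative. If instead the intermediate bound is in an \(X\)-type norm, the only products-into-\(X\) estimates available, \eqref{est:alg}, \eqref{est:LHbilX}, \eqref{est:HHbilX}, either give no gain or require both factors above regularity \(\tfrac12\), and the same loss occurs. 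For part (a) there are concrete admissible exponents where every grouping fails: take \(s=2\), \(\alpha=\beta=\gamma=\tfrac{11}{10}\), which satisfies all hypotheses of \eqref{est:tri}, yet any two-step split needs a pairwise sum exceeding \(s+\tfrac12=\tfrac52\) at the outer bilinear step, while every pairwise sum here is \(\tfrac{11}{5}<\tfrac52\). So the estimate is genuinely trilinear and cannot be factored through the bilinear machinery of Proposition \ref{propn:bil}.

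The missing idea is the one the paper uses: by the duality \((l^2_{2l}Y_l)^*=l^2_{2l}X_l\), one tests \(S_l(S_iuS_jvS_kw)\) against a fourth function \(z_l\) and proves a single quadrilinear integral bound (Lemma \ref{lem:tri-est}),
\begin{equation*}
\iint u_iv_jw_kz_l\,dxdt\lesssim2^{\tfrac{3}{2}i+\tfrac{3}{2}j-k-l}\|u_i\|_{l^2_{2i}X_i}\|v_j\|_{l^2_{2j}X_j}\|w_k\|_{l^2_{2k}X_k}\|z_l\|_{l^2_{2l}X_l}
\end{equation*}
for \(i\leq j\leq k\leq l\), proved by localising to cubes of scale \(2^{2i}\), applying Bernstein to the two lowest-frequency factors, and --- crucially --- using the local smoothing components of the \(X_k\) and \(X_l\) norms of \emph{both} highest-frequency functions simultaneously on those matched small cubes (each contributing a factor \(2^{i-k}\), \(2^{i-l}\)). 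This double gain is exactly what an iterated bilinear argument destroys: once you bound \(v_jw_k\) in any single product norm, the local smoothing of \(w_k\) can no longer be paired against that of the dual function \(z_l\). The cases of the proposition then follow by permuting which of the four functions plays the role of \(z_l\) (output comparable to the top frequency; output intermediate; output below all inputs) and summing, with \(\alpha+\beta+\gamma>s+1\) absorbing the \(2^{(\tfrac32-\alpha)i}\) sums when \(\alpha<\tfrac32\) and the pairwise conditions handling the remaining weights. Your instinct to use Bernstein and interval-size switching is the right raw material --- it is precisely what the lemma's proof does --- but it must be deployed on the quadrilinear integral after dualising, not inside a two-step bilinear bookkeeping.
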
\vspace{15pt}
The proof relies on the following lemma.

\vspace{15pt}\begin{lem}\label{lem:tri-est}
Suppose \(i\leq j\leq k\leq l\) then
\begin{equation}
\int_{[0,1]}\int_\R u_iv_jw_kz_l\,dxdt\lesssim2^{\tfrac{3}{2}i+\tfrac{3}{2}j-k-l}\|u_i\|_{l^2_{2i}X_i}\|v_j\|_{l^2_{2j}X_j}\|w_k\|_{l^2_{2k}X_k}\|z_l\|_{l^2_{2l}X_l}
\end{equation}
\end{lem}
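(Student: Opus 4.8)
The plan is to combine a frequency-support reduction with a Hölder estimate on a spatial cube decomposition, measuring the two lowest frequencies in \(L^\infty\) and the two highest in the local energy norm.

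First I would observe that for each fixed \(t\) the inner integral \(\int_\R u_iv_jw_kz_l\,dx\) is the value at zero frequency of the product, and hence vanishes unless \(0\) belongs to the sum \(\supp\hat u_i+\supp\hat v_j+\supp\hat w_k+\supp\hat z_l\). Since \(i\le j\le k\le l\) and each support lies in \(\{|\xi|\sim2^{\cdot}\}\), the largest frequency cannot be balanced by the other three unless \(2^l\sim2^k\); thus the estimate is trivial (both sides) unless \(l\le k+C\), and I may assume \(2^k\sim2^l\) throughout. I would also record the elementary reduction that, since \(\|f\chi_Q\|_{X_n}\ge2^n\|f\chi_Q\|_X\), one has \(2^{-k-l}\|w_k\|_{l^2_{2k}X_k}\|z_l\|_{l^2_{2l}X_l}\ge\|w_k\|_{l^2_{2k}X}\|z_l\|_{l^2_{2l}X}\), where the right-hand norms use the plain local energy norm \(\|\cdot\|_X\) in place of \(X_n\). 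It therefore suffices to prove the bound with the two highest factors placed in the local energy norm and with the explicit gain \(2^{-k-l}\) removed, the derivative gain being supplied by the \(X\)-norm itself.

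Next, I would decompose \(\R\) into cubes of the common high-frequency scale \(2^{2k}\sim2^{2l}\) using the partition \(\{\chi_Q\}\), and on each cube apply Hölder, putting \(u_i,v_j\) in \(L^\infty_{t,x}\) and \(w_k,z_l\) in \(L^2_{t,x}\). Bernstein's inequality \eqref{est:bernstein} converts the two low-frequency factors to \(L^\infty_tL^2_x\) at the cost of \(2^{i/2}\) and \(2^{j/2}\), comfortably inside the allowed \(2^{\frac32 i+\frac32 j}\), while the definition of \(X\) controls the local \(L^2\) norms of the high-frequency factors. Summing the resulting product of four localized norms over the cubes by Cauchy--Schwarz (using \(\ell^4\subset\ell^2\)), and then switching interval size down to the native scales \(2^{2i},2^{2j},2^{2k},2^{2l}\) exactly as in the proof of Proposition \ref{propn:bil}(a), would assemble the four \(l^2_{2\cdot}\) norms, the spare powers \(2^{i},2^{j}\) absorbing the low-frequency scale conversions.

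The main obstacle is that a naive version of this scheme only controls the high frequencies through their \(L^2\) mass, which at the scale \(2^{2k}\) exceeds the local energy norm by a full factor \(2^k\) per factor; the missing gain is genuinely nonlocal and comes from the frequency cancellation above, i.e. from the fact that \(w_kz_l\) pairs against the low-frequency function \(u_iv_j\). Making this precise amounts to a bilinear local-energy bound for the high-high-to-low interaction, controlling the low-frequency part of \(w_kz_l\) by \(\|w_k\|_X\|z_l\|_X\) with no loss, and then reconciling its nonlocal character with the cube-by-cube summation. I expect this interplay between the cancellation and the local energy structure, together with the attendant choice of spatial scale, to be the technical heart of the argument, mirroring the high-high bilinear estimates \eqref{est:bilHH} and \eqref{est:HHbilX}.
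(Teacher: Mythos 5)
There is a genuine gap here: your scheme never actually produces the factor \(2^{-k-l}\). You concede this yourself, deferring the ``missing gain'' to an unproven bilinear high-high cancellation estimate which you describe as the technical heart of the argument; as written, the proposal only yields the trivial bound with a loss of \(2^{k+l}\), since on a cube of the native scale \(2^{2k}\) the local energy norm gives \(\|w_k\|_{L^2([0,1]\times Q)}\lesssim 2^k\|w_k\|_X\lesssim\|w_k\|_{X_k}\), i.e.\ no gain whatsoever. Moreover the diagnosis is off: no frequency cancellation is needed, and the paper's proof makes no use of the Fourier supports beyond Bernstein (in particular it never invokes your reduction to \(l\leq k+C\), which is correct but irrelevant). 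The gain is not ``genuinely nonlocal''; it is entirely local, and your scheme forfeits it through the choice of cube scale.

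The missing idea is to decompose at the \emph{lowest} frequency scale \(2^{2i}\), not at \(2^{2k}\sim2^{2l}\). On a cube \(Q\in\mathcal{Q}_{2i}\) the definition of \(X\) gives \(\|w_k\|_{L^2([0,1]\times Q)}\leq 2^{i}\|w_k\|_X\leq 2^{i-k}\|w_k\|_{X_k}\), and likewise \(2^{i-l}\|z_l\|_{X_l}\) for \(z_l\): the smaller the cube, the less \(L^2\) mass the local energy norm permits, and this is precisely where \(2^{-k-l}\) comes from. The paper writes \(\iint u_iv_jw_kz_l=\sum_{Q\in\mathcal{Q}_{2i}}\iint u_i\chi_Q\,v_j\chi_Q\,w_kz_l\), puts \(w_k,z_l\) in \(l^\infty_{2i}L^2_{t,x}\) as above, sums \(\sum_Q\|u_i\chi_Q\|_{L^\infty_{t,x}}\|v_j\chi_Q\|_{L^\infty_{t,x}}\) by Cauchy--Schwarz, applies Bernstein \eqref{est:bernstein} to both low factors (cost \(2^{(i+j)/2}\)), and finally switches \(v_j\) from \(l^2_{2i}\) to \(l^2_{2j}\) at cost \(2^{j-i}\); the bookkeeping \(2^{\frac{5}{2}i+\frac{1}{2}j-k-l}\cdot2^{j-i}=2^{\frac{3}{2}i+\frac{3}{2}j-k-l}\) then gives exactly the stated exponent. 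Your observation that the \(X\) norm controls local mass at every scale is correct --- you simply applied it at the one scale where it yields nothing, and then invented a cancellation mechanism to compensate for a loss the right decomposition never incurs.
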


\begin{proof}We have
\begin{align*}
\iint u_iv_jw_kz_l\,dxdt&=\sum\limits_{Q\in\mathcal{Q}_{2i}}\iint u_i\chi_Qv_j\chi_Qw_kz_l\,dxdt\\
&\lesssim\left(\sum\limits_{Q\in\mathcal{Q}_{2i}}\|u_i\chi_Q\|_{L^\infty_{t,x}}\|v_j\chi_Q\|_{L^\infty_{t,x}}\right)\|w_k\|_{l^\infty_{2i} L^2_{t,x}}\|z_l\|_{l^\infty_{2i}L^2_{t,x}}\\
&\lesssim\|u_i\|_{l^2_{2i}L^\infty_{t,x}}\|v_j\|_{l^2_{2i}L^\infty_{t,x}}\|w_k\|_{l^\infty_{2i} L^2_{t,x}}\|z_l\|_{l^\infty_{2i}L^2_{t,x}}\\
&\lesssim2^{\tfrac{5}{2}i+\tfrac{1}{2}j-k-l}\|u_i\|_{l^2_{2i}L^\infty_tL^2_x}\|v_j\|_{l^2_{2i}L^\infty_tL^2_x}\|w_k\|_{X_k}\|z_l\|_{X_l}\\
&\lesssim2^{\tfrac{3}{2}i+\tfrac{3}{2}j-k-l}\|u_i\|_{l^2_{2i}L^\infty_tL^2_x}\|v_j\|_{l^2_{2j}L^\infty_tL^2_x}\|w_k\|_{l^2_{2k}X_k}\|z_l\|_{l^2_{2l}X_l}\end{align*}

\end{proof}\vspace{15pt}

To complete the proof of \eqref{est:tri} we use that with respect to \(L^2\) duality \((l^2_{2l}Y_l)^*=l^2_{2l}X_l\). We consider terms of the form \(S_l(S_iuS_jvS_kw)\) and by symmetry we may assume \(i\leq j\leq k\). The non-zero interactions can be divided into the following cases.

\textbf{Case 1.} \(|l-k|<4\). We use the above estimate and the fact that \(\gamma\geq s-2\) to get
\[
\|S_l(S_iuS_jvS_kw)\|_{l^2Y^s}\lesssim2^{(\tfrac{3}{2}-\alpha)i}2^{(s-\tfrac{1}{2}-\beta-\gamma)j}\|S_iu\|_{l^2X^\alpha}\|S_ju\|_{l^2X^\beta}\|S_ku\|_{l^2X^\gamma}
\]

If \(\alpha\geq\tfrac{3}{2}\) then we use that \(\beta+\gamma>s-\tfrac{1}{2}\). If \(\alpha<\tfrac{3}{2}\) then summing over \(i\leq j\) we have
\[
\sum\limits_{i\leq j}\|S_l(S_iuS_jvS_kw)\|_{l^2Y^s}\lesssim2^{(s+1-\alpha-\beta-\gamma)j}\|u\|_{l^2X^\alpha}\|S_ju\|_{l^2X^\beta}\|S_ku\|_{l^2X^\gamma}
\]

\textbf{Case 2.} \(k-j<4\), \(i\leq l\leq k-4\). Switching the roles of \(z_l\) and \(v_j\) in Lemma \ref{lem:tri-est} we have
\[
\|S_l(S_iuS_jvS_kw)\|_{l^2Y^s}\lesssim2^{(\tfrac{3}{2}-\alpha)i}2^{(s-\tfrac{1}{2}-\beta-\gamma)l}\|S_iu\|_{l^2X^\alpha}\|S_jv\|_{l^2X^\beta}\|S_kw\|_{l^2X^\gamma}
\]
and a similar argument to Case 1 gives that the sum converges.

\textbf{Case 3.} \(k-j<4\), \(l\leq i\leq k-4\). Switching the roles of \(z_l\) and \(u_i\) in Lemma \ref{lem:tri-est}
\[
\|S_l(S_iuS_jvS_kw)\|_{l^2Y^s}\lesssim2^{(s+\tfrac{3}{2})l}2^{-(\frac{1}{2}+\alpha+\beta+\gamma)i}\|S_iu\|_{l^2X^\alpha}\|S_jv\|_{l^2X^\beta}\|S_kw\|_{l^2X^\gamma}
\]so summing over \(i\geq l\)
\[
\sum\limits_{i\geq l}\|S_l(S_iuS_jvS_kw)\|_{l^2Y^s}\lesssim2^{(s+1-\alpha-\beta-\gamma)l}\|u\|_{l^2X^\alpha}\|S_jv\|_{l^2X^\beta}\|S_kw\|_{l^2X^\gamma}
\]

The frequency localised estimate \eqref{est:triLHH} follows from the proof of \eqref{est:tri}.

\vspace{15pt}


\subsection{Commutator estimates}
\begin{propn}For \(s\geq0\) and \(\sigma>\tfrac{7}{2}\), we have the estimate
\begin{equation}\label{est:com}
\|[S_j,\partial_xa_{<j-4}]\partial_x^2u\|_{l^2Y^s}\lesssim\|\partial_xa\|_{l^2X^{\sigma-1}}\|\tilde S_ju\|_{l^2X^s}
\end{equation}
\end{propn}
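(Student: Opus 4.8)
The plan is to treat this as a paradifferential commutator estimate, extracting the one-derivative smoothing that the commutator of a frequency projection with a slowly varying coefficient always provides. Since $\partial_x$ commutes with $S_j$, I would first write $[S_j,\partial_x a_{<j-4}]\partial_x^2 = \partial_x\,[S_j,a_{<j-4}]\,\partial_x^2$, reducing everything to the commutator $[S_j,a_{<j-4}]$ of the projection with multiplication by the low-frequency coefficient. The key structural fact is that $a_{<j-4}$ is localised to frequencies $\lesssim 2^{j-4}$, far below the output frequency $2^j$, so $[S_j,a_{<j-4}]$ is smoothing of one order relative to the underlying order-zero multiplier, and this gain is what must compensate the derivatives supplied by $\partial_x^2$.

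To make this quantitative I would use the kernel $K_j$ of $S_j$ and write $[S_j,a_{<j-4}]g(x)=\int K_j(x-y)\bigl(a_{<j-4}(y)-a_{<j-4}(x)\bigr)g(y)\,dy$. Integrating by parts in $y$ to move the two derivatives of $\partial_x^2$ off $u$, and Taylor expanding $a_{<j-4}(y)-a_{<j-4}(x)$ about $x$, exhibits the operator as a finite sum of terms of the form $T_j\bigl((\partial_x^k a_{<j-4})\,\tilde S_j u\bigr)$, where $T_j$ is a frequency-$2^j$ multiplier of the appropriate order and $k\geq 1$. In every term the coefficient enters through at least one derivative $\partial_x a$, matching the right-hand side; each additional derivative falling on $a$ is paid for by the regularity $\sigma-1>\tfrac{5}{2}$ to spare, and the frequency cut in $a_{<j-4}$ guarantees that only $\tilde S_j u$ survives on the high factor.

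Each of these terms is a low-high paraproduct whose low factor is built from $\partial_x a$ and whose high factor sits at frequency $2^j$, so I would estimate them with the frequency-localised bilinear estimates \eqref{est:bilLH} and \eqref{est:LHbilY}, or, for the most delicate contribution, by duality against $l^2X^{-s}$ together with the local-energy bound of Lemma \ref{lem:tri-est}. The exponent $\sigma>\tfrac{7}{2}$ should be exactly the threshold that makes the summability hypothesis $\alpha+\beta>s+\tfrac12$ of \eqref{est:bilLH} hold once $\alpha$ is fixed by the regularity $\sigma-1$ of $\partial_x a$ and $\beta$ is taken at its minimal admissible value. I expect the main obstacle to be the genuinely quasilinear top-order contribution, in which all available derivatives land on the high-frequency factor: here Sobolev embedding is insufficient, and one must use the local-smoothing structure of $X$ together with the $2^{-j}$ gain built into $Y_j$ to close at the borderline, and then verify that the resulting sum over the dyadic pieces $a_i$ with $i<j-4$ and over the output frequencies $j$ converges.
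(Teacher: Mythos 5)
Your high-level plan --- use the kernel of \(S_j\) to extract the commutator structure, trade the \(2^{-j}\) gain for a derivative on the coefficient, and close with the frequency-localised bilinear estimate \eqref{est:bilLH} --- is the same as the paper's, but your opening reduction breaks the argument at exactly the point where the derivative bookkeeping is unforgiving. In this paper \(\partial_xa_{<j-4}\) denotes multiplication by the \emph{function} \(\partial_xa_{<j-4}\) (this is forced by the paraproduct \(T_{\partial_xa}u=\sum S_{<j-4}(\partial_xa)S_ju\) from which the commutator arises), so \([S_j,\partial_xa_{<j-4}]\neq\partial_x[S_j,a_{<j-4}]\); the correct identity is
\[
[S_j,\partial_xa_{<j-4}]\partial_x^2u=\partial_x[S_j,a_{<j-4}]\partial_x^2u-[S_j,a_{<j-4}]\partial_x^3u
\]
and your first step silently discards the second term. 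This is not a harmless omission: computing the bilinear symbols, \emph{each} of your two pieces is a disposable operator of the shape \(C(\partial_xa_{<j-4},\partial_x^2\tilde S_ju)\), i.e.\ low factor \(\partial_xa\in l^2X^{\sigma-1}\) against a high factor at regularity \(s-2\), and \eqref{est:bilLH} does not apply there: the constraint \(\beta\geq s-1\) fails, reflecting that the local-smoothing pairing \(X\times X\rightarrow Y\) (including the \(2^{-j}\) built into \(Y_j\), which you hope will close the borderline) gains exactly \emph{one} derivative, never two, so no choice of exponents rescues \(\beta=s-2\). The target bound holds for the original commutator only because of a cancellation between your two pieces, which produces one extra factor of \(\eta\) on the low frequency, i.e.\ moves a derivative from \(u\) onto the coefficient. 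The paper's redistribution keeps this cancellation intact by writing \([S_j,\partial_xa_{<j-4}]\partial_x^2\tilde S_ju=\partial_x[S_j,\partial_xa_{<j-4}]\partial_x\tilde S_ju-[S_j,\partial_x^2a_{<j-4}]\partial_x\tilde S_ju\), where the first term is a disposable bilinear form in \((\partial_x^2a_{<j-4},\partial_x\tilde S_ju)\) as in \cite{MMT}, Proposition 3.2, and the second is a low-high product of the same shape, so both are handled by \eqref{est:bilLH} with \(\alpha=\sigma-2\), \(\beta=s-1\).

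This also corrects your threshold accounting, which is off by one derivative: you claim \(\sigma>\tfrac{7}{2}\) arises from \(\alpha=\sigma-1\) with \(\beta=s-1\) minimal, but \((\sigma-1)+(s-1)>s+\tfrac{1}{2}\) gives only \(\sigma>\tfrac{5}{2}\). The true source of \(\tfrac{7}{2}\) is that the commutator gain necessarily lands one more derivative on a coefficient that is \emph{already} \(\partial_xa\): the borderline products have the shape \((\partial_x^2a)(\partial_x\tilde S_ju)\), and \((\sigma-2)+(s-1)>s+\tfrac{1}{2}\) is exactly \(\sigma>\tfrac{7}{2}\) --- there is no regularity ``to spare''. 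Two smaller points: a finite Taylor expansion of \(a_{<j-4}(y)-a_{<j-4}(x)\) leaves an integral remainder that is not of your form \(T_j\bigl((\partial_x^ka_{<j-4})\tilde S_ju\bigr)\); keeping the first-order integral form yields precisely the disposable representation \(C(f,g)=\iint f(x+y)g(x+z)w(y,z)\,dydz\), \(\|w\|_{L^1}\lesssim1\), used in the paper, to which \eqref{est:bilLH} transfers by the translation invariance of the norms. And the duality against \(l^2X^{-s}\) together with Lemma \ref{lem:tri-est} is unnecessary here: once the disposable form is in hand, \eqref{est:bilLH} closes the estimate directly.
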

\begin{proof}
Due to the frequency localisation we can replace \(u\) by \(\tilde S_ju\) and write
\[
[S_j,\partial_xa_{<j-4}]\partial_x^2\tilde S_ju=\partial_x[S_j,\partial_xa_{<j-4}]\partial_x\tilde S_ju-[S_j,\partial_x^2a_{<j-4}]\partial_x\tilde S_ju
\]
The second commutator can be estimated by the bilinear estimate \eqref{est:bilLH}. To estimate the first commutator, as in \cite{MMT} Proposition 3.2, we can write
\[
\partial_x[S_j,\partial_xa_{<j-4}]\partial_x\tilde S_ju=C(\partial_xa_{<j-4},\partial_x\tilde S_ju)
\]
for a disposable operator
\[
C(f,g)=\iint f(x+y)g(x+z)w(y,z)\,dydz\qquad\|w\|_{L^1}\lesssim1
\]
and then use the bilinear estimate \eqref{est:bilLH}.

\end{proof}

\section{Linear estimates}\label{sect:lin}

\subsection{The Airy equation}
We consider the linear equation
\begin{equation}\label{eq:linear}
\pde{(\partial_t+\partial_x^3)u=f}{u(0)=u_0}
\end{equation}
An identical argument to \cite{HG} Proposition 4.3 gives the following result.

\vspace{15pt}\begin{propn}\label{propn:linLocWP}
For \(s\geq 0\), the linear equation \eqref{eq:linear} is well-posed in \(H^s\) on the time interval \([0,1]\) with the estimate
\begin{equation}\label{est:mainlin}
\|u\|_{l^2X^s}\lesssim\|u_0\|_{l^2H^s}+\|f\|_{l^2Y^s}
\end{equation}\end{propn}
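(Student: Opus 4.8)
The plan is to establish the linear estimate \eqref{est:mainlin} for the Airy equation \eqref{eq:linear} by combining a standard energy estimate in the $L^\infty_t L^2_x$ component of the $X_j$ norm with a local smoothing estimate controlling the $2^j\|\cdot\|_X$ component, and then to handle the inhomogeneous forcing via duality using the relation $Y^* = X$. Since the $l^2 X^s$ norm is defined frequency-by-frequency through the Littlewood--Paley pieces $S_j u$, and since $S_j$ commutes with the constant-coefficient operator $\partial_t + \partial_x^3$, it suffices to prove the estimate at a single dyadic frequency $2^j$ and then square-sum over $j$; the spatial summability (the $l^2_{2j}$ structure) is likewise preserved by the flow up to acceptable losses, so the real content is a fixed-frequency estimate.

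First I would prove the homogeneous estimate, i.e.\ for $f = 0$. The $L^\infty_t L^2_x$ bound is immediate from conservation of the $L^2$ norm under the unitary Airy propagator $e^{-t\partial_x^3}$. For the weighted part $2^j\|S_j u\|_X$, which measures the local-in-space $L^2_{t,x}$ mass on cubes of scale $2^l$, I would invoke the local smoothing effect of the Airy equation: solutions gain one spatial derivative when measured in $L^2_{t,x}$ locally, which at frequency $2^j$ supplies exactly the factor $2^j$ appearing in the definition of $X_j$. Concretely one uses the Kato-type identity obtained by pairing the equation against $\chi u$ for a suitable spatial cutoff $\chi$ and integrating by parts, so that the third-order term produces a positive local smoothing contribution of the form $\int |u_x|^2 \chi' \, dx\,dt$. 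This gives $\|u\|_{l^2 X^s} \lesssim \|u_0\|_{l^2 H^s}$.

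Next I would treat the inhomogeneous term by the Duhamel formula. Writing $u = e^{-t\partial_x^3}u_0 + \int_0^t e^{-(t-\tau)\partial_x^3} f(\tau)\,d\tau$, the first term is controlled by the homogeneous estimate. For the Duhamel integral, the dual formulation is the natural tool: since $Y$ is by construction the predual of $X$ (that is, $Y^* = X$, via \cite{MMT} Proposition 2.1), one tests the solution operator against $X$-atoms and reduces the inhomogeneous bound to the adjoint homogeneous estimate, which is the same local-smoothing bound run backwards in time. The splitting in the definition of $Y_j$ into a $Y$-part (scaled by $2^{-j}$) and an $L^1_t L^2_x$-part matches precisely the two mechanisms: the $L^1_t L^2_x$ piece is handled by Minkowski's inequality together with the unitarity of the propagator, while the $Y$-atom piece is absorbed by the dual local smoothing estimate, the $2^{-j}$ weight compensating the $2^j$ derivative gain.

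The main obstacle I expect is bookkeeping the interaction between the frequency localisation and the spatial $l^2_{2j}$ summability when the cube scale $2^l$ in the $X$ norm differs from the natural frequency scale $2^{2j}$ attached to $S_j$. One must verify that the cutoffs $\chi_Q$ at the two scales are compatible and that commuting $S_j$ past the spatial cutoffs produces only rapidly decaying tails, so that the frequency-by-frequency estimates can be square-summed without loss; this is exactly the kind of argument carried out in \cite{HG} Proposition 4.3, which is why the statement can be asserted by reference to that identical argument rather than reproved from scratch here. Apart from this technical matching, the proof is a routine combination of $L^2$ conservation, Airy local smoothing, and the $X$--$Y$ duality already set up in Section \ref{sect:spaces}.
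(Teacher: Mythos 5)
Your proposal is correct and follows essentially the same route as the paper, whose entire proof is the remark that an identical argument to \cite{HG} Proposition 4.3 applies: $L^2$ conservation for the $L^\infty_tL^2_x$ component of $X_j$, a Kato-type positive commutator identity for the local smoothing component $2^j\|\cdot\|_X$, the duality $Y^*=X$ for the forcing, and fixed-frequency estimates square-summed with commutator control of the spatial localisations $\chi_Q$. The one detail to phrase carefully is that the weight in the smoothing identity must be a bounded \emph{nondecreasing} function with slope $\sim 2^{-l}$ adapted to the cube at scale $2^l$, so that the commutator term $3\int \chi'|u_x|^2\,dx\,dt$ is genuinely signed; a compactly supported bump would have $\chi'$ of both signs, and it is with such monotone weights (as in \cite{HG}, \cite{MMT}) that both the homogeneous bound and the $Y$-atom pairing you describe go through.
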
\vspace{15pt}

\subsection{A paradifferential equation}
We define the paraproduct
\[
T_au=\sum\limits_{j\geq0}S_{<j-4}aS_ju
\]
and consider the equation
\begin{equation}\label{eq:paralinearpde}
\pde{(\partial_t+\partial_x^3-T_{\partial_xa}\partial_x^2)u=f}{u(0)=u_0}
\end{equation}
The equation \eqref{eq:paralinearpde} is a system of equations for the frequency localised components \(u_j\). We construct approximate solutions to each of these frequency localised equations by conjugating the linear operator by a suitable exponential term, similar to \cite{BT}. We then use these approximate solutions to construct a solution to \eqref{eq:paralinearpde}. The main result we prove is the following.

\vspace{15pt}\begin{propn}\label{propn:modairyparaEST}
Let \(s\geq0\), \(\sigma>\tfrac{7}{2}\) and \(a\in l^2X^\sigma\). Then there exists \(\delta=\delta(s,\sigma,\|a\|_{l^2X^\sigma})>0\) such that if \(a\) satisfies the estimates

\begin{equation}\label{cond:para1.1}
\|\partial_xa\|_{l^2X^{\sigma-1}}\leq\delta
\end{equation}
and
\begin{equation}
\label{cond:para2}\|T_{(\partial_t+\partial_x^3)a}\|_{l^2X^s\rightarrow l^2Y^s}\leq\delta
\end{equation}
there exists a unique solution \(u\) to \eqref{eq:paralinearpde} that satisfies the estimate
\begin{equation}\label{est:paralinearpde}
\|u\|_{l^2X^s}\lesssim e^{C\|a\|_{l^2X^\sigma}}\left(\|u_0\|_{l^2H^s}+\|f\|_{l^2Y^s}\right)
\end{equation}\end{propn}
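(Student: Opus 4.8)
The plan is to reduce \eqref{eq:paralinearpde} to the constant-coefficient Airy equation by a frequency-by-frequency conjugation, and then to recover an honest solution by a perturbative (Neumann series) argument exploiting the smallness hypotheses \eqref{cond:para1.1} and \eqref{cond:para2}. First I would apply \(S_j\) to the equation and extract the diagonal part of the paraproduct: writing \(u_j=S_ju\), the operator \(S_jT_{\partial_xa}\partial_x^2\) reduces, up to the commutator \([S_j,S_{<j-4}(\partial_xa)]\partial_x^2\) and a few near-diagonal terms, to multiplication of \(\partial_x^2u_j\) by the low-frequency coefficient \(c_j:=S_{<j-4}(\partial_xa)\). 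The commutator is placed on the right-hand side and controlled in \(l^2Y^s\) by \eqref{est:com}, while the off-diagonal pieces are handled by the frequency-localised bilinear estimates \eqref{est:bilLH}, \eqref{est:LHbilY}; thus each component solves a model equation
\[
(\partial_t+\partial_x^3-c_j\partial_x^2)u_j=g_j,
\]
with \(g_j\) consisting of \(f_j\) together with errors small in \(l^2Y^s\).

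Next, for each \(j\) I would conjugate away the second-order term. Setting \(u_j=e^{\phi_j}w_j\) with \(\phi_j:=\tfrac13 S_{<j-4}a\), so that \(\partial_x\phi_j=\tfrac13c_j\), a direct computation using \(e^{-\phi_j}\partial_x^3e^{\phi_j}=(\partial_x+\partial_x\phi_j)^3\) shows that the coefficient of \(\partial_x^2\) in the conjugated operator is \(3\partial_x\phi_j-c_j=0\). The conjugated operator therefore takes the form \(\partial_t+\partial_x^3+R_j\), where \(R_j\) has order at most one and all of its coefficients are built from \(\partial_x\phi_j\), its spatial derivatives, and \(\partial_t\phi_j\) — that is, from \(\partial_xa\), \(\partial_x^2a\) and the time derivative of \(a\). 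The spatial pieces are small in the relevant norms by \eqref{cond:para1.1} (using \(\sigma>\tfrac72\), which makes even \(\partial_x^3a=\partial_x^2(\partial_xa)\) small in a space with exponent above \(\tfrac12\)), while the time-derivative piece \(\partial_t\phi_j=\tfrac13 S_{<j-4}(\partial_ta)\), rewritten as \(\tfrac13 S_{<j-4}((\partial_t+\partial_x^3)a)-\tfrac13 S_{<j-4}(\partial_x^3a)\), is controlled by \eqref{cond:para2} and \eqref{cond:para1.1}.

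With the second-order term removed, I would treat \(R_jw_j\) perturbatively: applying the Airy estimate \eqref{est:mainlin} of Proposition \ref{propn:linLocWP} to \((\partial_t+\partial_x^3)w_j=e^{-\phi_j}g_j-R_jw_j\), and using the frequency-localised exponential bounds \eqref{est:FLexpX} and \eqref{est:FLexpY} of Corollary \ref{cor:exptype} to pass between \(u_j\) and \(w_j\) (absorbing the factors \(e^{\pm\phi_j}\) into the constant \(e^{C\|a\|_{l^2X^\sigma}}\)), the smallness of \(R_j\) lets the perturbative contribution be absorbed. This yields the frequency-localised bound, and weighting by \(2^{2js}\) and summing over \(j\) — the \(l^2X^s\) and \(l^2Y^s\) norms being defined precisely as such sums — produces the claimed estimate \eqref{est:paralinearpde}.

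Finally, to produce an actual and unique solution rather than an a priori bound, I would assemble the frequency-localised solution operators into an approximate inverse \(P\) for \eqref{eq:paralinearpde}, so that composing with the full operator gives \(I+E\) where the error \(E\) collects the off-diagonal paraproduct and commutator discrepancies; these are small by \eqref{cond:para1.1} and \eqref{cond:para2}, so \(I+E\) is invertible by a convergent Neumann series and the genuine solution is \(P(I+E)^{-1}\), with uniqueness following from the same a priori estimate. The main obstacle I anticipate is the bookkeeping in the conjugation step: the exponential \(e^{\phi_j}\) is not frequency localised, so every passage between \(u_j\) and \(w_j\) must be mediated by Corollary \ref{cor:exptype}, and one must check that the remainder \(R_j\) — in particular its first-order and time-derivative parts — can be summed over \(j\) in \(l^2Y^s\) against the small constant \(\delta\), uniformly in the frequency, so that the geometric series genuinely converges.
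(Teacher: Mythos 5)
Your proposal follows essentially the same route as the paper: the paper likewise reduces to the frequency-localised system \eqref{eq:paralinearpdefreqloc}, conjugates by \(e^{\pm\frac{1}{3}a_{<j-4}}\) so that the \(\partial_x^2\) coefficient cancels (Lemma \ref{lem:freclocapprox}, with the remainder \(R\) of \eqref{defn:R} controlled by \eqref{cond:para2} for the \((\partial_t+\partial_x^3)a\) part and by \eqref{cond:para1.1} plus the bilinear/trilinear and commutator estimates for the rest, the term \(\partial_x^2a_{<j-4}\,\partial_xh\) forcing \(\sigma>\tfrac{7}{2}\)), mediates the non-localised exponentials through \(S_{<j-4}(e^{\mp\frac{1}{3}a_{<j-4}})\) and Corollary \ref{cor:exptype} exactly as you anticipate, and recovers the exact solution by the same Neumann-type iteration you describe, packaged as successive approximate solutions in Lemma \ref{lem:freclocapprox} and Lemma \ref{lem:paraLocWP}. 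Uniqueness is likewise obtained from the a priori estimate for arbitrary solutions via the same conjugation (Proposition \ref{propn:modAiryparadiffs}), so your sketch is a faithful blueprint of the paper's argument.
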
\vspace{15pt}

\subsubsection{Proof of existence.}

We start by looking for a frequency localised solution to the equation
\begin{equation}\label{eq:paralinearpdefreqloc}
\pde{(\partial_t+\partial_x^3-\partial_xa_{<j-4}\partial_x^2)u_j=f_j}{u_j(0)=u_{0j}}
\end{equation}

\vspace{15pt}\begin{lem}\label{lem:freclocapprox}
If \(s\geq0\), \(\sigma>\tfrac{7}{2}\) and \(a\in l^2X^\sigma\) satisfies \eqref{cond:para1.1} and \eqref{cond:para2} for \(\delta>0\) sufficiently small, the equation \eqref{eq:paralinearpdefreqloc} has a frequency localised approximate solution \(\tilde u_j\) satisfying the estimate
\begin{equation}\label{est:ujtildebd}
\|\tilde u_j\|_{l^2X^s}\lesssim e^{C\|a\|_{l^2X^\sigma}}\left(\|u_{0j}\|_{l^2H^s}+\|f_j\|_{l^2Y^s}\right)
\end{equation}
and the error estimates
\begin{equation}\label{est:ujtilde0error}
\|\tilde u_j(0)-u_{0j}\|_{l^2H^s}\lesssim\delta e^{C\|a\|_{l^2X^\sigma}}\|u_{0j}\|_{l^2H^s}
\end{equation}
\begin{equation}\label{est:ujtildeerror}
\|(\partial_t+\partial_x^3-\partial_xa_{<j-4}\partial_x^2)\tilde u_j-f_j\|_{l^2Y^s}\lesssim\delta e^{C\|a\|_{l^2X^\sigma}}\left(\|u_{0j}\|_{l^2H^s}+\|f_j\|_{l^2Y^s}\right)
\end{equation}
\end{lem}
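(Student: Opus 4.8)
The plan is to follow the conjugation strategy of Bejenaru--Tataru \cite{BT} and \cite{HG}: since the coefficient $\partial_x a_{<j-4}$ of the lower order term in \eqref{eq:paralinearpdefreqloc} has size $\delta$ by \eqref{cond:para1.1}, I can remove it to leading order by conjugating with a low frequency exponential, reducing the equation to a free Airy equation up to an $O(\delta)$ error. Expanding the conjugated operator for a phase $\psi$,
\[
e^{-\psi}\left(\partial_t+\partial_x^3-(\partial_x a_{<j-4})\partial_x^2\right)e^{\psi}v=(\partial_t+\partial_x^3)v+\left(3\partial_x\psi-\partial_x a_{<j-4}\right)\partial_x^2 v+\ldots,
\]
one sees the second order term is cancelled by the choice $\psi_{<j}=\tfrac13 a_{<j-4}$, after which every remaining coefficient either carries a factor of $\partial_x\psi_{<j}\sim\partial_x a_{<j-4}$ (or $\partial_x^2 a_{<j-4}$) or is the zeroth order term $\tfrac13(\partial_t+\partial_x^3)a_{<j-4}$ coming from combining $\partial_t\psi_{<j}$ with $\partial_x^3\psi_{<j}$. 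I would therefore take $w_j$ to be the solution of the free Airy equation
\[
\pde{(\partial_t+\partial_x^3)w_j=S_j\!\left(e^{-\psi_{<j}}f_j\right)}{w_j(0)=S_j\!\left(e^{-\psi_{<j}(0)}u_{0j}\right)}
\]
furnished by Proposition \ref{propn:linLocWP}, and set $\tilde u_j=S_{<j-4}(e^{\psi_{<j}})\,w_j$, which is localised at frequency $\sim 2^j$.

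The bound \eqref{est:ujtildebd} then follows by chaining estimates. The Airy estimate \eqref{est:mainlin} controls $\|w_j\|_{l^2X^s}$ by $\|S_j(e^{-\psi_{<j}(0)}u_{0j})\|_{l^2H^s}+\|S_j(e^{-\psi_{<j}}f_j)\|_{l^2Y^s}$; the exponential estimates of Corollary \ref{cor:exptype} (with $\|\psi_{<j}\|_{l^2X^\sigma}\lesssim\|a\|_{l^2X^\sigma}$) convert these into $e^{C\|a\|_{l^2X^\sigma}}(\|u_{0j}\|_{l^2H^s}+\|f_j\|_{l^2Y^s})$; and \eqref{est:FLexpX} absorbs the outer factor $S_{<j-4}(e^{\psi_{<j}})$ at the same exponential cost. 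For the data error \eqref{est:ujtilde0error} I would write $\tilde u_j(0)-u_{0j}=S_{<j-4}(e^{\psi_{<j}(0)})S_j(e^{-\psi_{<j}(0)}u_{0j})-S_j u_{0j}$ and note that the two exponentials cancel exactly but for the interposed frequency projections; the resulting commutator expression is estimated by the frequency localised estimates \eqref{est:LHbilH}, \eqref{est:FLexpH} and gains the factor $\delta$ from $\|\partial_x a\|_{l^2X^{\sigma-1}}\le\delta$.

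The heart of the matter is the equation error \eqref{est:ujtildeerror}. Applying the operator to $\tilde u_j$ and using that $w_j$ solves the Airy equation above, the leading term reproduces $f_j$ and cancels, leaving $e^{\psi_{<j}}Ew_j$ together with commutators generated by the projections $S_{<j-4}$ and $S_j$. Here $E$ is the first order error operator from the conjugation, and I would split $Ew_j$ into (i) the first order term and the lower order terms whose coefficients are products involving $\partial_x a_{<j-4}$ or $\partial_x^2 a_{<j-4}$, estimated in $l^2Y^s$ by the low--high bilinear estimates \eqref{est:bilLH}, \eqref{est:LHbilY} together with the smallness \eqref{cond:para1.1}, and (ii) the single zeroth order term $\tfrac13\left((\partial_t+\partial_x^3)a_{<j-4}\right)w_j$. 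The commutator contributions are controlled by \eqref{est:com}, which is exactly where the hypothesis $\sigma>\tfrac72$ enters.

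The main obstacle is term (ii): the time derivative $\partial_t\psi_{<j}=\tfrac13\partial_t a_{<j-4}$ cannot be bounded by any spatial smallness of $a$, and it is precisely to control it that one recombines it with the dispersive part into $(\partial_t+\partial_x^3)a_{<j-4}$ and estimates the resulting paraproduct by the operator norm hypothesis \eqref{cond:para2}, $\|T_{(\partial_t+\partial_x^3)a}\|_{l^2X^s\rightarrow l^2Y^s}\le\delta$. Once this is in place every contribution to $Ew_j$ carries a factor of $\delta$, and a final application of \eqref{est:FLexpY}, \eqref{est:FLexpX} to restore the exponential weight yields \eqref{est:ujtildeerror} with the stated $\delta\,e^{C\|a\|_{l^2X^\sigma}}$ dependence.
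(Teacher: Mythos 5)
Your proposal is correct and follows essentially the same route as the paper: conjugation by $e^{\psi}$ with $\psi=\tfrac13 a_{<j-4}$, an Airy solve via Proposition \ref{propn:linLocWP}, exponential and frequency-localised bilinear estimates for \eqref{est:ujtildebd} and \eqref{est:ujtilde0error} (with the $\delta$ gain coming from $\partial_x e^{\mp\psi}\sim\partial_x a$ and \eqref{cond:para1.1}), the commutator estimate \eqref{est:com} for the projection errors, and \eqref{cond:para2} for the zeroth order term $\tfrac13(\partial_t+\partial_x^3)a_{<j-4}$. The only differences are cosmetic --- the paper places the projections as $\tilde u_j=\tilde S_j\bigl(e^{\frac13 a_{<j-4}}v_j\bigr)$ with forcing $S_{<j-4}\bigl(e^{-\frac13 a_{<j-4}}\bigr)f_j$ rather than your $S_{<j-4}(e^{\psi_{<j}})w_j$ with $S_j(e^{-\psi_{<j}}f_j)$, and it identifies the worst term forcing $\sigma>\tfrac72$ as the first order piece $\partial_x^2 a_{<j-4}\,\partial_x(\cdot)$ in the remainder $R$ (the same threshold that appears in \eqref{est:com}, as you note).
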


\begin{proof}~

Using Proposition \ref{propn:linLocWP} we take \(v_j\) to be the solution to
\begin{equation}\label{eq:normalisedfreqlocpara}
\pde{(\partial_t+\partial_x^3)v_j=S_{<j-4}(e^{-\frac{1}{3}a_{<j-4}})f_j}{v_j(0)=S_{<j-4}(e^{-\frac{1}{3}a_{<j-4}(0)})u_{0j}}
\end{equation}
so by \eqref{est:mainlin} and Corollary \ref{cor:exptype}
\begin{align*}
\|v_j\|_{l^2X^s}&\lesssim\|S_{<j-4}(e^{-\frac{1}{3}a_{<j-4}(0)})u_{0j}\|_{l^2H^s}+\|S_{<j-4}(e^{-\frac{1}{3}a_{<j-4}})f_j\|_{l^2Y^s}\\
&\lesssim e^{C\|a\|_{l^2X^\sigma}}\left(\|u_{0j}\|_{l^2H^s}+\|f_j\|_{l^2Y^s}\right)
\end{align*}
We construct a frequency localised approximate solution by taking
\[
\tilde u_j=\tilde S_j\left(e^{\frac{1}{3}a_{<j-4}}v_j\right)
\]
The estimate \eqref{est:FLexpH} then gives
\begin{align*}
\|\tilde u_j\|_{l^2X^s}&\lesssim e^{C\|a\|_{l^2X^\sigma}}\|v_j\|_{l^2X^s}\\
&\lesssim e^{C\|a\|_{l^2X^\sigma}}\left(\|u_{0j}\|_{l^2H^s}+\|f_j\|_{l^2Y^s}\right)
\end{align*}
proving \eqref{est:ujtildebd}.

For the error estimate \eqref{est:ujtilde0error} we have
\begin{align*}
\tilde u_j(0)&=\tilde S_j\left(e^{\frac{1}{3}a_{<j-4}(0)}S_{<j-4}(e^{-\frac{1}{3}a_{<j-4}(0)})u_{0j}\right)\\
&=u_{0j}-\tilde S_j\left(e^{\frac{1}{3}a_{<j-4}(0)}S_{\geq j-4}(e^{-\frac{1}{3}a_{<j-4}(0)})u_{0j}\right)
\end{align*}
We can then use the bilinear estimates \eqref{est:LHbilH}, \eqref{est:expH} to get
\begin{align*}
\|\tilde u_j(0)-u_{0j}\|_{l^2H^s}&\lesssim\|e^{\frac{1}{3}a_{<j-4}(0)}S_{\geq j-4}(e^{-\frac{1}{3}a_{<j-4}(0)})\|_{l^2H^\sigma}\|u_{0j}\|_{l^2H^s}\\
&\lesssim \|S_{\geq j-4}(e^{-\frac{1}{3}a_{<j-4}})\|_{l^2X^\sigma}e^{C\|a\|_{l^2X^\sigma}}\|u_{0j}\|_{l^2H^s}
\end{align*}
Finally we note that
\begin{align*}
\|S_{\geq j-4}(e^{-\frac{1}{3}a_{<j-4}})\|_{l^2X^\sigma}&\lesssim\|\partial_xS_{\geq j-4}(e^{-\frac{1}{3}a_{<j-4}})\|_{l^2X^{\sigma-1}}\\
&\lesssim\|\partial_xa_{<j-4}\|_{l^2X^{\sigma-1}}e^{C\|a\|_{l^2X^\sigma}}
\end{align*}

For \eqref{est:ujtildeerror} we calculate
\begin{align*}
(\partial_t+\partial_x^3-\partial_xa_{<j-4}\partial_x^2)\tilde u_j=\;&\tilde S_j(e^{\frac{1}{3}a_{<j-4}}S_{<j-4}(e^{-\frac{1}{3}a_{<j-4}})f_j)\\
&+[\tilde S_j,\partial_xa_{<j-4}]\partial_x^2(e^{\frac{1}{3}a_{<j-4}}v_j)\\
&+\tilde S_jR(a_{<j-4},e^{\frac{1}{3}a_{<j-4}}v_j)
\end{align*}
where
\begin{equation}\label{defn:R}
R(g,h)=\left(\tfrac{1}{3}(\partial_t+\partial_x^3)g-\tfrac{1}{3}\partial_xg\partial_x^2g+\tfrac{1}{27}(\partial_xg)^3\right)h+\left(\partial_x^2g-\tfrac{1}{3}(\partial_xg)^2\right)\partial_xh
\end{equation}
As in the estimate \eqref{est:ujtilde0error} we have
\begin{align*}
\|\tilde S_j(e^{\frac{1}{3}a_{<j-4}}S_{<j-4}(e^{-\frac{1}{3}a_{<j-4}})&f_j)-f_j\|_{l^2Y^s}\\
&\lesssim\|e^{\frac{1}{3}a_{<j-4}}S_{\geq j-4}(e^{-\frac{1}{3}a_{<j-4}})\|_{l^2X^\sigma}\|f_j\|_{l^2Y^s}\\
&\lesssim\delta e^{C\|a\|_{l^2X^\sigma}}\|f_j\|_{l^2Y^s}
\end{align*}
From the commutator estimate \eqref{est:com} we have
\[
\|[\tilde S_j,\partial_xa_{<j-4}]\partial_x^2(e^{\frac{1}{3}a_{<j-4}}v_j)\|_{l^2Y^s}\lesssim\|\partial_xa\|_{l^2X^{\sigma-1}}\|\tilde{\tilde S}_j(e^{\frac{1}{3}a_{<j-4}}v_j)\|_{l^2X^s}
\]
To estimate the remainder term we write
\[
\tilde S_jR(a_{<j-4},e^{\frac{1}{3}a_{<j-4}}v_j)=\tilde S_jR(a_{<j-4},\tilde{\tilde S}_j(e^{\frac{1}{3}a_{<j-4}}v_j))
\]
The hypothesis \eqref{cond:para2} gives
\[
\|(\partial_t+\partial_x^3)a_{<j-4}\tilde{\tilde S}_j(e^{\frac{1}{3}a_{<j-4}}v_j)\|_{l^2Y^s}\lesssim\delta\|\tilde{\tilde S}_j(e^{\frac{1}{3}a_{<j-4}}v_j)\|_{l^2X^s}
\]
The remaining terms can be estimated using Propositions \ref{propn:bil} and Corollary \ref{cor:exptype} with the hypothesis \eqref{cond:para1.1} to get
\begin{align*}
\|\tilde S_jR(a_{<j-4},e^{\frac{1}{3}a_{<j-4}}v_j)\|_{l^2Y^s}&\lesssim\delta\|\tilde{\tilde S}_j(e^{\frac{1}{3}a_{<j-4}}v_j)\|_{l^2X^s}\\
&\lesssim\delta e^{C\|a\|_{l^2X^\sigma}}\|v_j\|_{l^2X^s}
\end{align*}
The worst term in this estimate is \(\partial_x^2a_{<j-4}\partial_x\tilde{\tilde S}_j(e^{\frac{1}{3}a_{<j-4}}v_j)\) which requires \(\sigma>\tfrac{7}{2}\).
\end{proof}
\vspace{15pt}

Taking \(f_j^{(0)}=f_j\) and \(u_{0j}^{(0)}=u_{0j}\) we can use Lemma \ref{lem:freclocapprox} find an approximate solution \(\tilde u_j^{(0)}\) to \eqref{eq:paralinearpdefreqloc}. We then take
\[
f_j^{(n+1)}=f_j^{(n)}-(\partial_t+\partial_x^3-\partial_xa_{<j-4}\partial_x^2)\tilde u_j^{(n)}\qquad u_{0j}^{(n+1)}=u_{0j}^{(n)}-\tilde u_j^{(n)}(0)
\]
and use Lemma \ref{lem:freclocapprox} to construct a sequence of approximate solutions \(\tilde u_j^{(n)}\). For \(\delta\) sufficiently small
\begin{equation}\label{defn:constructeduj}
u_j=\sum\limits_{n\geq0}\tilde u_j^{(n)}
\end{equation}
converges to a solution to \eqref{eq:paralinearpdefreqloc} in \(l^2X^s\) satisfying the estimate
\begin{equation}
\label{est:ujbd}\|u_j\|_{l^2X^s}\lesssim e^{C\|a\|_{l^2X^\sigma}}\left(\|u_{0j}\|_{l^2H^s}+\|f_j\|_{l^2Y^s}\right)
\end{equation}
We can now use the solutions to \eqref{eq:paralinearpdefreqloc} to construct an approximate solution to \eqref{eq:paralinearpde}.

\vspace{15pt}\begin{lem}\label{lem:paraLocWP}
If \(s\geq0,\sigma>\tfrac{7}{2}\) and \(a\in l^2X^\sigma\) satisfies \eqref{cond:para1.1} and \eqref{cond:para2} for \(\delta>0\) sufficiently small, there exists an approximate solution \(\tilde u\) to \eqref{eq:paralinearpde} satisfying the estimate
\begin{equation}\label{est:apparalinearpde}
\|\tilde u\|_{l^2X^s}\lesssim e^{C\|a\|_{l^2X^\sigma}}\left(\|u_0\|_{l^2H^s}+\|f\|_{l^2Y^s}\right)
\end{equation}
and the error estimate
\begin{equation}\label{est:paralinearpdeerror}
\|(\partial_t+\partial_x^3-T_{\partial_xa}\partial_x^2)\tilde u-f\|_{l^2X^s}\lesssim\delta e^{C\|a\|_{l^2X^\sigma}}\left(\|u_0\|_{l^2H^s}+\|f\|_{l^2Y^s}\right)
\end{equation}
\end{lem}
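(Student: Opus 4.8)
The plan is to build $\tilde u$ by summing the frequency localised solutions $u_j$ to \eqref{eq:paralinearpdefreqloc} constructed in \eqref{defn:constructeduj}, namely $\tilde u=\sum_{j\geq 0}u_j$ with $u_{0j}=S_ju_0$ and $f_j=S_jf$, so that $\sum_j u_{0j}=u_0$ and $\sum_j f_j=f$. Since each $u_j$ is built from $\tilde S_j(\cdots)$ it is frequency localised to $|\xi|\sim 2^j$, so $S_k u_j$ vanishes unless $|j-k|$ is bounded and the sum is almost orthogonal in $l^2X^s$, giving $\|\tilde u\|_{l^2X^s}^2\lesssim\sum_j\|u_j\|_{l^2X^s}^2$. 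Combined with the bound \eqref{est:ujbd} and the identities $\sum_j\|u_{0j}\|_{l^2H^s}^2=\|u_0\|_{l^2H^s}^2$, $\sum_j\|f_j\|_{l^2Y^s}^2=\|f\|_{l^2Y^s}^2$, this yields \eqref{est:apparalinearpde}. I also note $\tilde u(0)=\sum_j u_j(0)=\sum_j u_{0j}=u_0$, so $\tilde u$ attains the data exactly and the only error is in the equation.

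For the error I would use that each $u_j$ solves \eqref{eq:paralinearpdefreqloc} exactly, so $(\partial_t+\partial_x^3)u_j=f_j+\partial_xa_{<j-4}\partial_x^2 u_j$. Summing and subtracting $f=\sum_j f_j$ leaves the purely spatial mismatch between the frozen coefficient and the paraproduct,
\[
(\partial_t+\partial_x^3-T_{\partial_xa}\partial_x^2)\tilde u-f=\sum_{j\geq 0}\big(\partial_xa_{<j-4}\partial_x^2 u_j-T_{\partial_xa}\partial_x^2 u_j\big).
\]
Expanding $T_{\partial_xa}\partial_x^2 u_j=\sum_n S_{<n-4}(\partial_xa)S_n\partial_x^2 u_j$ and using that only $n$ with $|n-j|$ bounded contribute (again by the localisation of $u_j$), each summand becomes a finite near-diagonal expression $\sum_{n\sim j}\big[S_{<j-4}(\partial_xa)-S_{<n-4}(\partial_xa)\big]S_n\partial_x^2 u_j$, in which the coefficient difference is a bounded number of Littlewood--Paley pieces of $\partial_xa$ at frequency $\sim 2^j$, comparable to that of $u_j$.

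Thus each term is a high--high interaction between a frequency-$\sim 2^j$ piece of $\partial_xa$ and $\partial_x^2 u_j$, of exactly the type controlled by the commutator estimate \eqref{est:com}, together with the high--high bilinear estimate \eqref{est:bilHH} for the output reconstructed near frequency $2^j$. Both estimates produce the factor $\|\partial_xa\|_{l^2X^{\sigma-1}}$, which by hypothesis \eqref{cond:para1.1} is $\leq\delta$; combined with \eqref{est:ujbd} and the almost-orthogonal summation over $j$ this gives the bound $\delta\,e^{C\|a\|_{l^2X^\sigma}}(\|u_0\|_{l^2H^s}+\|f\|_{l^2Y^s})$, i.e. \eqref{est:paralinearpdeerror}.

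The main obstacle is the derivative count. A naive product bound loses two derivatives on $u_j$, which is fatal for the high--high-to-low output, where the frequency gain in \eqref{est:bilHH} sits at the low output frequency and cannot absorb the $2^{2j}$ coming from $\partial_x^2 u_j$. The commutator structure underlying \eqref{est:com}, whose proof splits the two derivatives as $\partial_x[S_j,\partial_xa_{<j-4}]\partial_x$, is precisely what redistributes one derivative onto the coefficient and closes the estimate uniformly for $s\geq 0$; this is also where the threshold $\sigma>\tfrac{7}{2}$ enters, and why the commutator estimate, which lands in the source-type norm $l^2Y^s$, is the natural tool for the mismatch.
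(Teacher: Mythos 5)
Your skeleton matches the paper's proof of this lemma: you define \(\tilde u=\sum_{j\geq0}u_j\) with \(u_{0j}=S_ju_0\), \(f_j=S_jf\), get \eqref{est:apparalinearpde} from \eqref{est:ujbd} plus almost orthogonality, and your error identity \((\partial_t+\partial_x^3-T_{\partial_xa}\partial_x^2)\tilde u-f=\sum_j\sum_{n\sim j}\left[S_{<j-4}(\partial_xa)-S_{<n-4}(\partial_xa)\right]S_n\partial_x^2u_j\) is an exact and correct rewriting of the coefficient mismatch (which the paper, after regrouping, writes as \(\sum_j\sum_{k\sim j}\{S_j(\partial_x(a_{<k-4}-a_{<j-4})\partial_x^2u_k)+[S_j,\partial_xa_{<j-4}]\partial_x^2u_k\}\)); also, as you and the paper both actually prove, the error bound \eqref{est:paralinearpdeerror} is really an \(l^2Y^s\) bound, the \(l^2X^s\) in the statement being a typo. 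The gap is in how you estimate the mismatch terms. After your expansion, \(\left[S_{<j-4}(\partial_xa)-S_{<n-4}(\partial_xa)\right]S_n\partial_x^2u_j\) is a plain product with no \([S_j,\cdot]\) structure left, so the commutator estimate \eqref{est:com} — stated and proved for \([S_j,\partial_xa_{<j-4}]\partial_x^2u\) with a \emph{low-passed} coefficient — does not apply to it, and "the commutator structure underlying \eqref{est:com}" cannot be what closes your argument. Moreover your diagnosis that \eqref{est:bilHH} is "fatal" at low output frequencies is wrong for your decomposition, so as written the key step neither parses nor is its claimed necessity correct.

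The fix is contained in your own observation that the coefficient difference consists of a bounded number of Littlewood--Paley pieces of \(\partial_xa\) at frequency \(\sim2^j\): such pieces are \emph{small} in low-regularity norms, \(\|S_m\partial_xa\|_{l^2X^\alpha}\lesssim2^{(\alpha-\sigma+1)j}\|\partial_xa\|_{l^2X^{\sigma-1}}\) for \(m\sim j\) and \(0\leq\alpha\leq\sigma-1\). Feeding this into \eqref{est:bilHH} at output frequency \(l\lesssim j\) with \(\alpha+\beta=s+\tfrac{1}{2}+\epsilon\) gives \(\|S_l(\cdots)\|_{l^2Y^s}\lesssim2^{-\epsilon l}2^{(\frac{7}{2}+\epsilon-\sigma)j}\|\partial_xa\|_{l^2X^{\sigma-1}}\|u_j\|_{l^2X^s}\): the high-frequency smallness of the coefficient absorbs both the \(2^{2j}\) from \(\partial_x^2\) and the bilinear factor, and this — not \eqref{est:com} — is where \(\sigma>\tfrac{7}{2}=2+1+\tfrac{1}{2}\) enters in your route. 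You then need a Schur/Cauchy--Schwarz summation over \(j\geq l\) rather than bare almost orthogonality, since in your form a fixed low output frequency \(l\) receives contributions from every \(j\). Alternatively, regroup as the paper does before estimating: keep the genuine commutators \([S_j,\partial_xa_{<j-4}]\partial_x^2u_k\), which \eqref{est:com} does control, together with the output-pinned products \(S_j(\partial_x(a_{<k-4}-a_{<j-4})\partial_x^2u_k)\) handled by \eqref{est:bilLH} and \eqref{est:bilHH}; with that regrouping every term is localised at output frequency \(\sim2^j\) and the almost-orthogonal summation you invoke becomes immediate.
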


\begin{proof}
We define
\[
\tilde u=\sum\limits_{j\geq0}u_j
\]
where \(u_j\) is a solution to \eqref{eq:paralinearpdefreqloc}. Due to the frequency localisation of \(u_j,u_{0j},f_j\) and and the estimate \eqref{est:ujbd} this converges in \(l^2X^s\) and satisfies the estimate \eqref{est:apparalinearpde}. We can calculate the error
\begin{align*}
(\partial_t+\partial_x^3-T_{\partial_xa}\partial_x^2)\tilde u-f=\sum\limits_{j\geq0}\sum\limits_{k\sim j}&\left\{S_j(\partial_x(a_{<k-4}-a_{<j-4})\partial_x^2u_k)\right.\\
&\left.+[S_j,\partial_xa_{<j-4}]\partial_x^2u_k\right\}
\end{align*}
The commutator can be estimated using \eqref{est:com} to get
\[
\|[S_j,\partial_xa_{<j-4}]\partial_x^2u_k\|_{l^2Y^s}\lesssim\|\partial_xa\|_{l^2X^{\sigma-1}}\|\tilde S_ju_k\|_{l^2X^s}
\]
For the remaining term we can use the bilinear estimate \eqref{est:bilHH} and the frequency localisation to get
\[
\|S_j(\partial_x(a_{<k-4}-a_{<j-4})\partial_x^2u_k)\|_{l^2Y^s}\lesssim\|\partial_xa\|_{l^2X^{\sigma-1}}\|u_k\|_{l^2X^s}
\]
So from \eqref{est:ujbd} and almost orthogonality we have
\begin{align*}
\|(\partial_t+\partial_x^3-T_{\partial_xa}\partial_x^2)\tilde u-f\|_{l^2Y^s}&\lesssim\left(\sum\limits_{j\geq0}\sum\limits_{k\sim j}\|\partial_xa\|_{l^2X^{\sigma-1}}^2\|u_k\|_{l^2X^s}^2\right)^{\frac{1}{2}}\\
&\lesssim\delta e^{C\|a\|_{l^2X^\sigma}}\left(\|u_0\|_{l^2H^s}+\|f\|_{l^2Y^s}\right)
\end{align*}
\end{proof}\vspace{15pt}

\subsubsection{Proof of uniqueness.}

The uniqueness of the solution to \eqref{eq:paralinearpde} is a corollary of the following proposition.

\vspace{15pt}
\begin{propn}\label{propn:modAiryparadiffs}
Suppose \(s\geq0,\sigma>\tfrac{7}{2}\) and for \(i=1,2\), \(a\sk\in l^2X^\sigma\) satisfy \eqref{cond:para1.1} and \eqref{cond:para2}. If \(u\sk\) solves
\begin{equation}
\pde{(\partial_t+\partial_x^3-T_{\partial_xa\sk}\partial_x^2)u\sk=f\sk}{u\sk(0)=u_0\sk}
\end{equation}
then for \(\delta=\delta(s,\sigma,\|a\so\|_{l^2X^\sigma},\|a\st\|_{l^2X^\sigma})>0\) sufficiently small,
\begin{align}\label{est:uskdif}
&\|u\so-u\st\|_{l^2X^s}\\
&\lesssim C(\|a\so\|_{l^2X^\sigma},\|a\st\|_{l^2X^\sigma})\left(\|u_0\so-u_0\st\|_{l^2H^s}+\|f\so-f\st\|_{l^2Y^s}\right.\notag\\
&\left.+(\|a\so\!\!-\!a\st\|_{l^2X^\sigma}\!\!+\!\|T_{(\partial_t+\partial_x^3)(a\so\!-a\st)}\|_{l^2X^s\rightarrow l^2Y^s}\!)(\|u_0\so\|_{l^2H^s}\!+\!\|f\so\|_{l^2Y^s}\!)\!\right)\notag
\end{align}
\end{propn}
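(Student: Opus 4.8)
The plan is to compare the two solutions at the level of the Airy equation: conjugating the frequency pieces of each \(u\sk\) by the exponential phase of Lemma \ref{lem:freclocapprox} turns them into approximate solutions of the linear equation \eqref{eq:linear}, so that their difference is governed by the \emph{a priori} estimate \eqref{est:mainlin}, which holds for every solution, rather than by \eqref{est:paralinearpde}, whose uniqueness we are in the process of establishing. Throughout write \(w=u\so-u\st\) and \(b:=a\so-a\st\).

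Concretely, for each \(j\) I would set \(v\sk_j=\tilde S_j(e^{-\frac13 a\sk_{<j-4}}u\sk_j)\). Applying \(S_j\) to the equation for \(u\sk\) and running the conjugation computation behind Lemma \ref{lem:freclocapprox} in reverse (using that \(R(g,\cdot)\) from \eqref{defn:R} is linear in its second argument), \(v\sk_j\) solves
\[
(\partial_t+\partial_x^3)v\sk_j=S_{<j-4}(e^{-\frac13 a\sk_{<j-4}})f\sk_j+E\sk_j,
\]
where the error \(E\sk_j\) collects the commutator \eqref{est:com} and the remainder \eqref{defn:R} evaluated at \(a\sk_{<j-4}\) with second argument \(u\sk_j\), and is of size \(O(\delta)\) relative to \(u\sk_j\) by \eqref{cond:para1.1} and \eqref{cond:para2}. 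Subtracting, \(v\so_j-v\st_j\) solves an Airy equation whose forcing I would split into three groups: the genuine (conjugated) differences of \(f\so-f\st\) and \(u_0\so-u_0\st\), reproducing the first two terms of \eqref{est:uskdif}; the self-interaction \(R(a\st_{<j-4},w_j)\) together with the commutator applied to \(w_j\), of size \(O(\delta)\|w\|_{l^2X^s}\), which for \(\delta\) small is absorbed into the left-hand side; and the coefficient-difference terms, coming from \(R(a\so_{<j-4},\cdot)-R(a\st_{<j-4},\cdot)\) acting on \(u\so_j\) and from the phase difference \(e^{-\frac13 a\so}-e^{-\frac13 a\st}\), which carry the entire dependence on \(b\).

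I would then apply \eqref{est:mainlin} to \(v\so_j-v\st_j\), conjugate back using \eqref{est:FLexpX}, and sum over \(j\) by almost orthogonality, estimating the coefficient-difference group by the algebra, bilinear and trilinear estimates of Propositions \ref{propn:bil} and \ref{propn:tri} together with Corollary \ref{cor:exptype}. Differencing the remainder \eqref{defn:R} in its first argument is exactly where the two distinct factors in the last line of \eqref{est:uskdif} appear: the spatial contributions (from \(\partial_x^2 b\), from the differences of the products \(\partial_x a\,\partial_x^2 a\) and \((\partial_x a)^3\), and from the phase difference) are all linear in \(b\) and yield \(\|b\|_{l^2X^\sigma}\), whereas the term \(\tfrac13(\partial_t+\partial_x^3)b\) can only be measured in the dual operator norm, producing \(\|T_{(\partial_t+\partial_x^3)b}\|_{l^2X^s\rightarrow l^2Y^s}\); in both cases the remaining factor is a norm of \(u\so\), bounded by \(\|u_0\so\|_{l^2H^s}+\|f\so\|_{l^2Y^s}\) via \eqref{est:ujbd}. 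Collecting the exponential constants into \(C(\|a\so\|_{l^2X^\sigma},\|a\st\|_{l^2X^\sigma})\) then gives \eqref{est:uskdif}.

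The step I expect to be the main obstacle is the estimate for the coefficient-difference group, and in particular the remainder term \(\partial_x^2 b\,\partial_x h\) with \(h\sim u\so\): since \(b\in l^2X^\sigma\) only, \(\partial_x^2 b\) lies in \(l^2X^{\sigma-2}\), and pairing it with a single derivative of the high-frequency factor forces the threshold \(\sigma>\tfrac{7}{2}\), exactly as in the worst term of Lemma \ref{lem:freclocapprox}. The \((\partial_t+\partial_x^3)b\) contribution is the genuinely quasilinear piece: it cannot be controlled by the spatial regularity of \(b\) alone, which is precisely why it must enter \eqref{est:uskdif} through the operator norm of condition \eqref{cond:para2} rather than through \(\|b\|_{l^2X^\sigma}\). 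Finally, the uniqueness of \eqref{eq:paralinearpde} follows at once by taking \(a\so=a\st\), \(f\so=f\st\) and \(u_0\so=u_0\st\), for which the right-hand side of \eqref{est:uskdif} vanishes.
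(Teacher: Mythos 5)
Your proposal is correct and follows essentially the same route as the paper's own proof: you conjugate the frequency-localised pieces by \(e^{-\frac{1}{3}a\sk_{<j-4}}\), apply the linear estimate \eqref{est:mainlin} to the difference of the conjugated variables, absorb the \(O(\delta)\) self-interaction terms for \(\delta\) small, and isolate the \((\partial_t+\partial_x^3)(a\so-a\st)\) contribution through the paraproduct operator norm, with the worst term \(\partial_x^2(a\so-a\st)\,\partial_x h\) forcing \(\sigma>\tfrac{7}{2}\), exactly as in the paper. The only cosmetic difference is that the paper performs the back-conversion from \(v_j\so-v_j\st\) to \(u_j\so-u_j\st\) via an auxiliary variable \(w_j\sk\) with \(u_j\sk=S_{<j-4}(e^{\frac{1}{3}a\sk_{<j-4}})w_j\sk\) and the smallness bound \(\|v_j\sk-w_j\sk\|_{l^2X^s}\lesssim\delta e^{C\|a\sk\|_{l^2X^\sigma}}\|w_j\sk\|_{l^2X^s}\), a detail your ``conjugate back using \eqref{est:FLexpX}'' step compresses but does not change.
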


\begin{proof}
We define
\[
v_j\sk=\tilde S_j(e^{-\frac{1}{3}a_{<j-4}\sk}S_ju\sk)
\]
Then,
\[
\pde{(\partial_t+\partial_x^3)v_j\sk=\tilde S_j(e^{-\tfrac{1}{3}a_{<j-4}\sk}f_j\sk)-r(a_{<j-4}\sk,v_j\sk)}{v_j\sk(0)=\tilde S_j(e^{-\frac{1}{3}a_{<j-4}\sk(0)}u_{0j}\sk)}
\]
where
\begin{align*}
r(a_{<j-4},v_j)=&\left(\tfrac{1}{3}(\partial_t+\partial_x^3)a_{<j-4}+\tfrac{2}{27}(\partial_xa_{<j-4})^3\right)v_j\\
&+\left(\partial_x^2a_{<j-4}-\tfrac{1}{3}(\partial_xa_{<j-4})^2\right)\partial_xv_j
\end{align*}

From the estimate \eqref{est:mainlin} with the estimates of Corollary \ref{cor:exptype} we have
\[
\|v_j\sk\|_{l^2X^s}\lesssim e^{C\|a\sk\|_{l^2X^\sigma}}(\|u_{0j}\sk\|_{l^2H^s}+\|f_j\sk\|_{l^2Y^s})+\|r(a_{<j-4}\sk,v_j\sk)\|_{l^2Y^s}
\]

From the hypothesis \eqref{cond:para2} we have
\[
\|(\partial_t+\partial_x^3)a_{<j-4}\sk v_j\sk\|_{l^2Y^s}\lesssim\delta\|v_j\sk\|_{l^2X^s}
\]
and from the hypothesis \eqref{cond:para1.1} and the estimates of Proposition \ref{propn:bil} we have
\[
\|r(a_{<j-4}\sk,v_j\sk)\|_{l^2X^s}\lesssim\delta\|v_j\sk\|_{l^2X^s}
\]
We note that the worst term we need to estimate is
\[
\|\partial_x^2a_{<j-4}\sk\partial_xv_j\sk\|_{l^2Y^s}\lesssim\|\partial_xa\sk\|_{l^2X^{\sigma-1}}\|v_j\|_{l^2X^s}
\]
which requires \(\sigma>\tfrac{7}{2}\). For \(\delta\) sufficiently small we then have
\begin{equation}\label{est:vibd}
\|v_j\sk\|_{l^2X^s}\lesssim C(\|a\sk\|_{l^2X^\sigma})(\|u_{0j}\sk\|_{l^2H^s}+\|f_j\sk\|_{l^2Y^s})
\end{equation}

The difference \(v_j\so-v_j\st\) satisfies
\[
\left\{\begin{array}{rl}(\partial_t+\partial_x^3)(v_j\so-v_j\st)=&\!\!\!\!\tilde S_j(e^{-\tfrac{1}{3}a_{<j-4}\so}f_j\so-e^{-\tfrac{1}{3}a_{<j-4}\st}f_j\st)\vspace{0.2cm}\\&\!\!\!\!+r(a_{<j-4}\so,v_j\so)-r(a_{<j-4}\st,v_j\st)\vspace{0.2cm}\\
v_j\so(0)-v_j\st(0)=&\!\!\!\!\tilde S_j(e^{-\frac{1}{3}a_{<j-4}\so(0)}u_{0j}\so-e^{-\frac{1}{3}a_{<j-4}\st(0)}u_{0j}\st)\end{array}\right.
\]
We can use \eqref{est:alg} and \eqref{est:FLexpY} to estimate
\begin{align*}
\|\tilde S_j(e^{-\tfrac{1}{3}a_{<j-4}\so}f_j\so&-e^{-\tfrac{1}{3}a_{<j-4}\st}f_j\st)\|_{l^2Y^s}\\
\lesssim&\;e^{C\max(\|a\so\|_{l^2X^\sigma},\|a\st\|_{l^2X^\sigma})}\|a\so-a\st\|_{l^2X^\sigma}\|f_j\so\|_{l^2Y^s}\\
&\;+e^{C\|a\st\|_{l^2X^\sigma}}\|f_j\so-f_j\st\|_{l^2Y^s}
\end{align*}
and similarly using \eqref{est:Halg} and \eqref{est:alg}
\begin{align*}
\|\tilde S_j(e^{-\tfrac{1}{3}a_{<j-4}\so}u_{0j}\so&-e^{-\tfrac{1}{3}a_{<j-4}\st}u_{0j}\st)\|_{l^2H^s}\\
\lesssim&\;e^{C\max(\|a\so\|_{l^2X^\sigma},\|a\st\|_{l^2X^\sigma})}\|a\so-a\st\|_{l^2X^\sigma}\|u_{0j}\so\|_{l^2H^s}\\
&\;+e^{C\|a\st\|_{l^2X^\sigma}}\|u_{0j}\so-u_{0j}\st\|_{l^2H^s}
\end{align*}
From the hypothesis \eqref{cond:para2} we have
\begin{align*}
\|(\partial_t+\partial_x^3)a_{<j-4}\so v_j\so&-(\partial_t+\partial_x^3)a_{<j-4}\st v_j\st\|_{l^2Y^s}\\
\lesssim&\;\|T_{(\partial_t+\partial_x^3)(a\so-a\st)}\|_{l^2X^s\rightarrow l^2Y^s}\|v_j\so\|_{l^2X^s}\\
&\;+\delta\|v_j\so-v_j\st\|_{l^2X^s}
\end{align*}
For the remaining terms we use the hypothesis \eqref{cond:para1.1} with the estimates \eqref{est:alg}, \eqref{est:bilLH} and \eqref{est:tri} to get
\begin{align*}
\|r(a_{<j-4}\so,v_j\so)&-r(a_{<j-4}\st,v_j\st)\|_{l^2Y^s}\\
\lesssim&\; C(\|a\so\|_{l^2X^\sigma},\|a\st\|_{l^2X^\sigma})\left(\|a\so-a\st\|_{l^2X^\sigma}\right.\\
&\left.+\|T_{(\partial_t+\partial_x^3)(a\so-a\st)}\|_{l^2X^s\rightarrow l^2Y^s}\right)\|v_j\so\|_{l^2X^s}\\
&\left.+\delta\|v_j\so-v_j\st\|_{l^2X^s}\right.
\end{align*}

So, choosing \(\delta\) sufficiently small, from \eqref{est:mainlin} and \eqref{est:vibd} we have
\begin{align*}
&\|v_j\so-v_j\st\|_{l^2X^s}\\
&\lesssim C(\|a\so\|_{l^2X^\sigma},\|a\st\|_{l^2X^\sigma})\left(\|u_{0j}\so-u_{0j}\st\|_{l^2H^s}+\|f_j\so-f_j\st\|_{l^2Y^s}\right.\notag\\
&\left.+(\|a\so\!\!-\!a\st\|_{l^2X^\sigma}\!\!+\!\|T_{(\partial_t+\partial_x^3)(a\so\!-a\st)}\|_{l^2X^s\rightarrow l^2Y^s}\!)(\|u_{0j}\so\|_{l^2H^s}\!+\!\|f_j\so\|_{l^2Y^s}\!)\!\right)\notag
\end{align*}

If we write
\[
u_j\sk=S_{<j-4}(e^{\frac{1}{3}a_{<j-4}\sk})w_j\sk
\]
then as for the estimate \eqref{est:ujtilde0error}, we have
\[
\|v_j\sk-w_j\sk\|_{l^2X^s}\lesssim\delta e^{C\|a\|_{l^2X^\sigma}}\|w_j\sk\|_{l^2X^s}
\]
So, for sufficiently small \(\delta\),
\[
\|w_j\sk\|_{l^2X^s}\lesssim C(\|a\sk\|_{l^2X^\sigma})\|v_j\sk\|_{l^2X^s}
\]
We can then write 
\begin{align*}
v_j\so-v_j\st=&\;\tilde S_j\left((e^{-\frac{1}{3}a_{<j-4}\so}-e^{-\frac{1}{3}a_{<j-4}\st})S_{<j-4}(e^{\frac{1}{3}a_{<j-4}\so})w_j\so)\right.\\
&\left.+\tilde S_j(e^{-\frac{1}{3}a_{<j-4}\st}S_{<j-4}(e^{\frac{1}{3}a_{<j-4}\so}-e^{\frac{1}{3}a_{<j-4}\st})w_j\so)\right.\\
&\left.+\tilde S_j(e^{-\frac{1}{3}a_{<j-4}\st}S_{<j-4}(e^{\frac{1}{3}a_{<j-4}\st})(w_j\so-w_j\st)\right)
\end{align*}
and have the estimate
\begin{align*}
&\|(v_j\so-v_j\st)-(w_j\so-w_j\st)\|_{l^2X^s}\\
&\lesssim C(\|a\so\|_{l^2X^\sigma},\|a\st\|_{l^2X^\sigma})(\|a\so\!-\!a\st\|_{l^2X^\sigma}\|v_j\so\|_{l^2X^s}\!+\!\|v_j\so\!-\!v_j\st\|_{l^2X^s})
\end{align*}
So we then have
\begin{align*}
&\|u_j\so-u_j\st\|_{l^2X^s}\\
&\lesssim C(\|a\so\|_{l^2X^\sigma}\!,\!\|a\st\|_{l^2X^\sigma})(\|a\so\!\!-\!a\st\|_{l^2X^\sigma}\|w_j\so\|_{l^2X^s}\!+\!\|w_j\so\!\!-\!w_j\st\|_{l^2X^s})\\
&\lesssim C(\|a\so\|_{l^2X^\sigma}\!,\!\|a\st\|_{l^2X^\sigma})(\|a\so\!-\!a\st\|_{l^2X^\sigma}\|v_j\so\|_{l^2X^s}\!+\!\|v_j\so\!-\!v_j\st\|_{l^2X^s})
\end{align*}
\end{proof}


\section{Rescaling}\label{sect:rescale}
As the \(l^2X^s,l^2Y^s\) spaces are adapted to the unit interval, we rescale the initial data to allow us to consider a small data problem on the unit time interval. Following Bejenaru-Tataru \cite{BT} we split the initial data into low and high frequency parts. As the large low frequency part is essentially stationary on the unit interval we freeze it a time \(t=0\) and solve for the high frequency component.

We rescale the initial data according to the nonlinearity
\[
u_0^{(k)}=2^{\lambda k}u_0(2^{-k}x)
\]
where, for
\[
F(u,u_x,u_{xx})=\sum\limits_{\alpha}c_\alpha u^{\alpha_0}u_x^{\alpha_1}u_{xx}^{\alpha_2}
\]
we define
\begin{equation}
\lambda=\max\left\{\frac{\beta_1+2\beta_2-3}{|\beta|-1}:|\beta|\geq2,\,\beta\leq\alpha,\,c_\alpha\neq0\right\}
\end{equation}
We then define the low and high frequency components of the rescaled initial data to be
\[
u_0^{(k)l}=S_0u_0^{(k)}\qquad u_0^{(k)h}=u_0^{(k)}-u_0^{(k)l}
\]

We have the following estimates for the low and high frequency components of the rescaled initial data.

\vspace{15pt}
\begin{lem}\label{lem:scaleests}~

a) (High frequency estimate)
\begin{equation}\label{est:rescaledHF}
\|u_0^{(k)h}\|_{l^2H^s}\lesssim2^{(\lambda+\frac{1}{2}-s)k}\|u_0\|_{l^2H^s}
\end{equation}

b) (Low frequency \(l^2H^s\) estimate) If \(s>\tfrac{1}{2}\) then for any \(\sigma\geq0\)
\begin{equation}\label{est:rescaledLF}
\|\partial_x^ru_0^{(k)l}\|_{l^2H^\sigma}\lesssim2^{(\lambda-\min\{s-\frac{1}{2},r\})k}\|u_0\|_{l^2H^s}
\end{equation}

\end{lem}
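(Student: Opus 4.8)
The plan is to throw away the $l^2$ spatial structure at once. Both estimates only involve the datum at $t=0$, and the excerpt records $\|\cdot\|_{l^2H^s}\sim\|\cdot\|_{H^s}$, so everything reduces to classical scaling bounds on the Fourier side. The physical dilation $x\mapsto 2^{-k}x$ becomes a frequency dilation through $\widehat{u_0^{(k)}}(\xi)=2^{(\lambda+1)k}\hat u_0(2^k\xi)$, and after Plancherel each norm is an integral in $\xi$ which I linearise by the substitution $\eta=2^k\xi$. The powers of $2^k$ then come from three transparent sources: the amplitude/normalisation factor $2^{(\lambda+1)k}$, the Jacobian $2^{-k}$ from $d\xi=2^{-k}\,d\eta$, and, in part (b), the factor $2^{-rk}$ produced by the symbol $|\xi|^{r}=2^{-rk}|\eta|^{r}$ of $\partial_x^r$.

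For part (a) the high-frequency projection $1-S_0$ restricts the integral to $|\xi|\gtrsim1$, where $\langle\xi\rangle\sim|\xi|$, so that $\|u_0^{(k)h}\|_{H^s}\lesssim\|u_0^{(k)}\|_{\dot H^s}$. The homogeneous norm scales exactly: the substitution above gives $\|u_0^{(k)}\|_{\dot H^s}=2^{(\lambda+\frac12-s)k}\|u_0\|_{\dot H^s}$, and $\|u_0\|_{\dot H^s}\le\|u_0\|_{H^s}\sim\|u_0\|_{l^2H^s}$ closes \eqref{est:rescaledHF}. No case distinction is needed here precisely because the homogeneous and inhomogeneous norms agree on the frequency support of the high-frequency part.

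For part (b) the projection $S_0$ localises to $|\xi|\lesssim1$, where $\langle\xi\rangle^\sigma\sim1$ for every $\sigma\ge0$; hence $\|\partial_x^r u_0^{(k)l}\|_{l^2H^\sigma}\sim\|\partial_x^r S_0u_0^{(k)}\|_{L^2}$ uniformly in $\sigma$, which is why $\sigma$ is absent from the right-hand side. After Plancherel and the substitution the remaining task is to bound $\int_{|\eta|\lesssim2^k}|\eta|^{2r}|\hat u_0(\eta)|^2\,d\eta$ against $\|u_0\|_{H^s}^2$, the collected prefactor accounting for amplitude, Jacobian and derivative weight. I would resolve the competition between the gain from the $r$ derivatives and the regularity of $u_0$ by a dyadic splitting $|\eta|\sim2^l$, $l\le k$: estimating each piece by $2^{-2sl}\big(2^{2sl}\|S_lu_0\|_{L^2}^2\big)$ against the $H^s$ budget and summing the geometric series in $l$ truncates the realisable derivative gain at the regularity of $u_0$. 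This produces the cap $\min\{s-\tfrac12,r\}$ in \eqref{est:rescaledLF}, the hypothesis $s>\tfrac12$ entering exactly as the summability/endpoint condition for the low-frequency sum.

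The only genuinely delicate point is this balancing in part (b): one must keep the derivative weight and the frequency-restricted integral together and track how far the regularity $s$ allows the derivative gain to be realised, which is where $s>\tfrac12$ and the precise value of the cap $\min\{s-\tfrac12,r\}$ are pinned down. Everything else — part (a), the reduction of the high-frequency piece to $\dot H^s$, and the reduction of the low-frequency piece to $L^2$ via $\langle\xi\rangle^\sigma\sim1$ — is routine once the equivalence $\|\cdot\|_{l^2H^s}\sim\|\cdot\|_{H^s}$ is invoked.
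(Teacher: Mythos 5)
Your part (a) is fine and is essentially what the paper does: by $\|\cdot\|_{l^2H^s}\sim\|\cdot\|_{H^s}$ the claim reduces to the exact scaling of the (homogeneous) Sobolev norm on the frequency support $|\xi|\gtrsim1$, and your bookkeeping of the amplitude, Jacobian and derivative factors is correct.

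Part (b) has a genuine gap, located exactly at the point you yourself flag as delicate. Push your computation to the end: the collected prefactor is $2^{(\lambda+1)k}\cdot2^{-k/2}\cdot2^{-rk}=2^{(\lambda+\frac12-r)k}$, and the dyadic splitting gives
\begin{equation*}
\Bigl(\int_{|\eta|\lesssim2^k}|\eta|^{2r}|\hat u_0(\eta)|^2\,d\eta\Bigr)^{1/2}\lesssim\Bigl(\sum_{l\leq k}2^{2(r-s)l}\,2^{2sl}\|S_lu_0\|_{L^2}^2\Bigr)^{1/2}\lesssim2^{\max(r-s,0)k}\|u_0\|_{H^s},
\end{equation*}
so your route yields the exponent $2^{(\lambda+\frac12-\min\{r,s\})k}$, i.e.\ the cap $\min\{r-\frac12,\,s-\frac12\}$, \emph{not} $\min\{r,\,s-\frac12\}$: whenever $r\leq s-\frac12$ you are short of \eqref{est:rescaledLF} by a factor $2^{k/2}$. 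This is not a crudeness you can refine away on the Fourier side: taking $\hat u_0$ a bump concentrated at $|\eta|\sim1$ (so that $u_0^{(k)l}=u_0^{(k)}$) saturates the quantity $\|\partial_x^rS_0u_0^{(k)}\|_{L^2}$ at $2^{(\lambda+\frac12-r)k}\|u_0\|_{H^s}$, so a pure Plancherel argument can never produce the stated cap. A telltale sign is that the hypothesis $s>\frac12$ never actually enters your sum — for $r=0$ your geometric series converges for every $s>0$ — so the $\frac12$ in the cap must have a different origin than the one you assign it.

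That origin, in the paper, is an $L^\infty$-based mechanism which never passes through the global $L^2$ norm of the stretched function: one bounds $L^2$ by $L^\infty$ on \emph{unit} cubes (unit measure, so no Jacobian is paid), rescales to the $l^2_{-k}L^\infty$ norm of $\partial_x^rS_{\leq k}u_0$, expands $S_{\leq k}=\sum_{j\leq k}S_j$, and applies a Bernstein bound of the type \eqref{est:bernstein} costing $2^{j/2}$ \emph{per frequency block}; the resulting sum $\sum_{j\leq k}2^{(r+\frac12-s)j}$ is where both the $\frac12$-shift and the hypothesis $s>\frac12$ genuinely arise. You should be aware, however, that your completed computation puts real pressure on that route as well: since $\|u_0^{(k)l}\|_{l^2H^\sigma}\sim\|u_0^{(k)l}\|_{L^2}$ for an $S_0$-localised function, the saturating example above shows the left-hand side of \eqref{est:rescaledLF} can be of size $2^{(\lambda+\frac12-r)k}\|u_0\|_{H^s}$, and correspondingly the paper's aggregated Bernstein step $\|S_ju_0\|_{l^2_{-k}L^\infty}\lesssim2^{j/2}\|S_ju_0\|_{l^2_{-k}L^2}$ is delicate for $j<k$ (summing sups in $\ell^2$ over cubes of length $2^{-k}$, below the wavelength $2^{-j}$, costs an extra $2^{(k-j)/2}$ over the matched-scale inequality). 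So to repair your write-up you must either reproduce and scrutinise the paper's $l^2_0L^\infty$/Bernstein route, or settle for the cap $\min\{r,s\}-\frac12$ and verify that the downstream uses of the lemma (the margins in $\gamma$ and in $\mu_{\alpha\beta}$) tolerate the extra $2^{k/2}$; what you cannot do is obtain \eqref{est:rescaledLF} as stated from the $L^2$ computation you outline.
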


\begin{proof}~

a) This follows from the fact that
\[
\|u_0^{(k)h}\|_{l^2H^s}\sim\|u_0^{(k)h}\|_{H^s}
\]

b) We have
\begin{align*}
\|\partial_x^ru_0^{(k)l}\|_{l^2H^s}&\lesssim\|\partial_x^ru_0^{(k)l}\|_{l^2_0L^2}\\
&\lesssim\|\partial_x^ru_0^{(k)l}\|_{l^2_0L^\infty}\\
&\lesssim2^{(\lambda-r)k}\|\partial_x^rS_{\leq k}u_0\|_{l^2_{-k}L^\infty}\\
&\lesssim2^{(\lambda-r)k}\sum\limits_{j=0}^k2^{rj}\|S_ju_0\|_{l^2_{-k}L^\infty}\\
&\lesssim2^{(\lambda-r)k}\sum\limits_{j=0}^k2^{(r+\tfrac{1}{2})j}\|S_ju_0\|_{l^2_{-k}L^2}\\
&\lesssim 2^{(\lambda-\min\{s-\frac{1}{2},r\})k}\|u_0\|_{l^2H^s}
\end{align*}

\end{proof}
\vspace{15pt}

\begin{rk}\label{rk:lambda}
We note that for any of the possible nonlinearities \(F\), we have \(\lambda\in[-3,2)\). We also have that \(s_0\geq\lambda+\tfrac{1}{2}\), so provided \(s>s_0\) we can ensure \(\|u_0^{(k)h}\|_{l^2H^s}\) is arbitrarily small by choosing sufficiently large \(k\).
\end{rk}\vspace{15pt}

If \(u\) solves \eqref{eq:pde}, we rescale
\[
u^{(k)}(t,x)=2^{\lambda k}u(2^{-3k}t,2^{-k}x)
\]
and fix the low frequency component at time \(t=0\)
\[
v=u^{(k)}-u_0^{(k)l}
\]
Taking \(v_0=u_0^{(k)h}\), we then have that \(v\) solves the equation
\begin{equation}\label{eq:veqn}
\pde{(\partial_t+\partial_x^3)v=\tilde F(x,v,v_x,v_{xx})}{v(0)=v_0}
\end{equation}
where
\[
\tilde F=2^{(\lambda-3)k}F\left(v+u_0^{(k)l},2^k\partial_x(v+u_0^{(k)l}),2^{2k}\partial_x^2(v+u_0^{(k)l})\right)+\partial_x^3u_0^{(k)l}
\]

Due to the Mizohata condition \eqref{eq:Mz} we split the nonlinearity
\[
\tilde F(x,v,v_x,v_{xx})=B(x,v_x,v_{xx})+G(x,v,v_x,v_{xx})
\]
where
\begin{equation}\label{defn:badterms}
B(x,v_x,v_{xx})=c_12^{-\lambda k}(\partial_xu_0^{(k)l}v_{xx}+v_xv_{xx})+c_22^{(1-\lambda)k}(\partial_x^2u_0^{(k)l}v_{xx}+v_{xx}^2)
\end{equation}
contains the `bad' quadratic terms where two derivatives fall on one term. We have the following estimate for the `good' terms \(G\).

\vspace{15pt}\begin{propn}\label{propn:niceRHSterms}
For \(s>s_0\)
\begin{align}\label{est:Gest}
\|G&(x,v,v_x,v_{xx})\|_{l^2Y^s}\\&\lesssim C(\|u_0\|_{l^2H^s})\left(\|v\|_{l^2X^s}^2\langle\|v\|_{l^2X^s}\rangle^{m-2}+2^{-k}\|v\|_{l^2X^s}+2^{-\gamma k}\|u_0\|_{l^2H^s}\right)\notag
\end{align}
where \(m\) is the degree of \(F\) and
\begin{equation}\label{defn:gamma}
\gamma=\min(1,s-\lambda-\tfrac{1}{2})>0
\end{equation}

Further, if \(v\sk,G\sk\) correspond to initial data \(u_0\sk\) for \(i=1,2\), we have the following estimate for the difference
\begin{align}\label{est:Gdiff}
&\|G\so(x,v\so)-G\st(x,v\st)\|_{l^2Y^s}\\
&\lesssim C(\|u_0\sk\|_{l^2H^s}\!,\!\|v\sk\|_{l^2X^s})\left(\|v\so\!-\!v\st\|_{l^2X^s}\left(\|v\so\|_{l^2X^s}\!+\!\|v\st\|_{l^2X^s}\!+\!2^{-k}\right)\right.\notag\\
&\left.\quad+\|u_0\so-u_0\st\|_{l^2H^s}\left((\|v\so\|_{l^2X^s}+\|v\st\|_{l^2X^s})^2\notag\right.\right.\\
&\left.\left.\;\;\:\,+2^{-k}(\|v\so\|_{l^2X^s}+\|v\st\|_{l^2X^s})+2^{-\gamma k}\right)\right)\notag
\end{align}
\end{propn}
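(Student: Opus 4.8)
The plan is to substitute $u^{(k)}=v+w$, with $w:=u_0^{(k)l}$ the frozen low-frequency profile, into the rescaled nonlinearity and expand $G=\tilde F-B$ into a finite sum of multilinear monomials. Each monomial $u^{\alpha_0}u_x^{\alpha_1}u_{xx}^{\alpha_2}$ of $F$ produces, after the binomial expansion of each $(v+w)$-power, terms of the form $c\,2^{E_\alpha k}\prod_{i=1}^{P}\partial_x^{a_i}v\prod_{j=1}^{Q}\partial_x^{b_j}w$ with $P+Q=|\alpha|\geq2$, $a_i,b_j\in\{0,1,2\}$, and scaling exponent $E_\alpha=(\alpha_1+2\alpha_2-3)-\lambda(|\alpha|-1)$; to these one adds the single linear term $\partial_x^3w$ arising from freezing the low frequencies. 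The terms diverted into $B$, see \eqref{defn:badterms}, are precisely the $v_{xx}$-containing pieces of the quadratic monomials $u_xu_{xx}$ and $u_{xx}^2$, which are the only quadratics with $\alpha_2\geq1$ once $uu_{xx}$ is excluded, so that every surviving monomial either is at least cubic, or is quadratic with no $v_{xx}$ factor, or is purely low-frequency.

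I would estimate each monomial in $l^2Y^s$ by placing the $v$-factors, with their derivatives, into $l^2X^s$ via the algebra estimate \eqref{est:alg} and the (tri)linear estimates \eqref{est:bil}, \eqref{est:tri}, and by pulling each low-frequency factor $\partial_x^bw$ out in $l^2X^\sigma$ using the frequency-localised bounds \eqref{est:LHbilX}, \eqref{est:LHbilY} together with the rescaling estimate \eqref{est:rescaledLF}, which supplies the factor $2^{(\lambda-\min(s-\tfrac{1}{2},b))k}\|u_0\|_{l^2H^s}$ per $w$-factor. The high-frequency factors carry no power of $2^k$, only a redistribution of derivatives, so that, writing $\beta=(p_0,p_1,p_2)\leq\alpha$ for the multi-index recording the $v$-factors, the net exponent of $2^k$ telescopes --- for $s$ above the relevant threshold so that $\min(s-\tfrac{1}{2},b)=b$ --- to $(p_1+2p_2)-3-\lambda(|\beta|-1)$. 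This is where the definition of $\lambda$ is used in full: since the maximum defining $\lambda$ runs over all sub-multi-indices $\beta\leq\alpha$ with $|\beta|\geq2$, we have $\lambda(|\beta|-1)\geq\beta_1+2\beta_2-3$ whenever $P=|\beta|\geq2$, giving a nonpositive exponent and hence a bound $\lesssim C(\|u_0\|_{l^2H^s})\|v\|_{l^2X^s}^P$; summing over $2\leq P\leq m$ produces the term $\|v\|_{l^2X^s}^2\langle\|v\|_{l^2X^s}\rangle^{m-2}$ in \eqref{est:Gest}.

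For the remaining buckets the same computation gives, when $P=1$, the exponent $(p_1+2p_2)-3\leq-1$ (the quadratic instances with $p_1+2p_2=2$ being exactly the $v_{xx}$ terms already removed into $B$), whence the $2^{-k}\|v\|_{l^2X^s}$ contribution; and when $P=0$ the purely low-frequency terms, estimated directly in $l^2H^s$ by \eqref{est:Halg} and \eqref{est:rescaledLF}, together with $\partial_x^3w$, scale like $2^{-\gamma k}$ with $\gamma=\min(1,s-\lambda-\tfrac{1}{2})$ as in \eqref{defn:gamma}. The main obstacle is the bookkeeping of $2^k$ in the borderline monomials adjacent to the subtracted bad terms: one must simultaneously use that $\lambda$ dominates every sub-multi-index ratio and that each derivative landing on a low-frequency factor yields a genuine gain of size up to $2^{-k}$, and verify that the net exponent lands in the correct half-line in each bucket. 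This is exactly what pins down the threshold $s>s_0$, the most delicate case being several factors of $v_{xx}$, each of which can only be placed at the regularity $l^2X^{s-2}$, so that the trilinear estimate \eqref{est:tri} forces $s$ above its stated value.

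Finally, the difference estimate \eqref{est:Gdiff} follows from the multilinearity of $G$ by a telescoping argument: I would write $G\so-G\st$ as a sum of monomials in which exactly one factor is replaced by $v\so-v\st$ or by a difference of frozen profiles controlled by $\|u_0\so-u_0\st\|_{l^2H^s}$ via \eqref{est:rescaledLF}, and apply to each summand precisely the estimates above. The three groups on the right-hand side of \eqref{est:Gdiff} correspond to differencing a $v$-factor (leaving $\|v\so\|_{l^2X^s}+\|v\st\|_{l^2X^s}+2^{-k}$ from the surviving factors), and to differencing a low-frequency factor with the remaining factors containing at least two, exactly one, or no copies of $v$ (producing the $(\|v\so\|_{l^2X^s}+\|v\st\|_{l^2X^s})^2$, $2^{-k}(\cdots)$, and $2^{-\gamma k}$ terms respectively).
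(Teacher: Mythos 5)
Your proposal is correct and takes essentially the same route as the paper: the paper likewise expands \(G\) into monomials \(w_{\alpha\beta}\) indexed by the sub-multi-index \(\beta\leq\alpha\) recording the \(v\)-factors, bounds each by \(2^{-\mu_{\alpha\beta}k}\|u_0\|_{l^2H^s}^{|\alpha|-|\beta|}\|v\|_{l^2X^s}^{|\beta|}\) using \eqref{est:alg}, \eqref{est:bil}, \eqref{est:tri} and the low-frequency bounds \eqref{est:LFXH}, \eqref{est:rescaledLF}, and checks the sign of the exponent in exactly your three buckets \(|\beta|\geq2\) (via the defining maximum for \(\lambda\)), \(|\beta|=1\) (gain \(3-\beta_1-2\beta_2\geq1\)), and \(|\beta|=0\) together with \(\partial_x^3u_0^{(k)l}\) (gain \(\gamma\)). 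Your telescoping treatment of the difference bound \eqref{est:Gdiff} also matches the paper's argument of rewriting \(G\so-G\st\) as a polynomial in the differences and reapplying the same estimates.
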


\begin{proof}
We note that by Remark \ref{rk:lambda} we have \(\gamma>0\). To estimate the inhomogeneous term \(\partial_x^3u_0^{(k)l}\) we use \eqref{est:rescaledLF}.
\[
\|\partial_x^3u_0^{(k)l}\|_{l^2Y^s}\lesssim2^{-(\min(s-\frac{1}{2},3)-\lambda)k}\|u_0\|_{l^2H^s}
\]
The remaining terms in \(G\) are of the form
\[
w_{\alpha\beta}=c_\alpha2^{(\lambda-\lambda|\alpha|+\alpha_1+2\alpha_2-3)k}(u_0^{(k)l})^{\alpha_0-\beta_0}(u_0^{(k)l})_x^{\alpha_1-\beta_1}(u_0^{(k)l})_{xx}^{\alpha_2-\beta_2}v^{\beta_0}v_x^{\beta_1}v_{xx}^{\beta_2}
\]
where \(0\leq\beta\leq\alpha\). We can use the algebra estimate \eqref{est:alg}, bilinear estimate \eqref{est:bil}, trilinear estimate \eqref{est:tri} and the low frequency estimates \eqref{est:LFXH} and \eqref{est:rescaledLF} to get
\[
\|w_{\alpha\beta}\|_{l^2Y^s}\lesssim2^{-\mu_{\alpha\beta}k}\|u_0\|_{l^2H^s}^{|\alpha|-|\beta|}\|v\|_{l^2X^s}^{|\beta|}
\]
where
\begin{align*}
\mu_{\alpha\beta}=&\,\lambda(|\beta|-1)-(\beta_1+2\beta_2-3)+(s-\tfrac{1}{2})(\alpha_0-\beta_0)\\
&+\min(s-\tfrac{3}{2},0)(\alpha_1-\beta_1)+\min(s-\tfrac{5}{2},0)(\alpha_2-\beta_2)\notag
\end{align*}
If \(|\beta|\geq2\), from the definition of \(\lambda\) we have,
\[
\mu_{\alpha\beta}\geq\lambda(1-|\beta|)-(\beta_1+2\beta_2-3)\geq0
\]
If \(|\beta|=1\) we have
\[
\mu_{\alpha\beta}\geq3-\beta_1-2\beta_2\geq1
\]
If \(|\beta|=0\) we have
\[
\mu_{\alpha\beta}\geq3-\lambda\geq1
\]

For \eqref{est:Gdiff} we write the difference \(G\so(x,v\so)-G\st(x,v\st)\) as a polynomial in \((u_0\sk)^{(k)l}\), \(v\sk\),\((u_0\so)^{(k)l}-(u_0\st)^{(k)l}\), \(v\so-v\st\) and use the same estimates as for \eqref{est:Gest}. (See \cite{HG} Lemma 5.2).
\end{proof}
\vspace{15pt}

\begin{rk}In the case that \(B\equiv0\) we can now apply a contraction mapping argument using Proposition \ref{propn:linLocWP} to prove Theorem \ref{thrm:main} (see \cite{HG} for example).\end{rk}


\section{Proof of Theorem \ref{thrm:main}}\label{sect:pf}

\subsection{The paradifferential decomposition}
We now consider the case that \(B\not\equiv0\). The difficulty here is that we cannot apply the bilinear estimates of Proposition \ref{propn:bil} when two derivatives fall at high frequency. However, by using a paradifferential decomposition we can consider an equation of the form \eqref{eq:paralinearpde} and use Proposition \ref{propn:modairyparaEST}.
 
We decompose \(B\) at frequency \(2^j\) as
\[
S_jB(x,v_x,v_{xx})=\partial_xa_{<j-4}S_jv_{xx}+[S_j,\partial_xa_{<j-4}]v_{xx}+b_j
\]
where
\[
a_{<j-4}(x,v)=c_12^{-\lambda k}\left(u_0^{(k)l}+S_{<j-4}v\right)+c_22^{(1-\lambda)k}\partial_x\left(u_0^{(k)l}+2S_{<j-4}v\right)
\]
and
\[
b_j(v)=c_12^{-\lambda k}S_j(S_{\geq j-4}(v_x)v_{xx})+c_22^{(1-\lambda)k}S_j((S_{\geq j-4}v_{xx})^2)
\]

Let
\[
H_j(x,v)=[S_j,\partial_xa_{<j-4}]v_{xx}+b_j(x,v)+S_jG(x,v,v_x,v_{xx})
\]
and \(H(x,v)=\sum H_j(x,v)\). The equation \eqref{eq:veqn} for \(v\) can then be written as
\begin{equation}\label{eq:paraveqn}
\pde{(\partial_t+\partial_x^3-T_{\partial_x a}\partial_x^2)v=H(x,v)}{v(0)=v_0}
\end{equation}
We have the following estimate for \(H\).

\vspace{15pt}\begin{propn}\label{propn:Hest}
For \(s>s_0\) and \(m\), \(\gamma\) as in Proposition \ref{propn:niceRHSterms}
\begin{align}\label{est:Hbnd}
\|H&(x,v)\|_{l^2Y^s}\\&\lesssim C(\|u_0\|_{l^2X^s})\left(\|v\|_{l^2X^s}^2\langle\|v\|_{l^2X^s}\rangle^{m-2}+2^{-k}\|v\|_{l^2X^s}+2^{-\gamma k}\|u_0\|_{l^2H^s}\right)\notag
\end{align}

Further, if \(v\sk,H\sk\) correspond to initial data \(u_0\sk\) for \(i=1,2\), we have the following estimate for the difference
\begin{align}\label{est:Hdiff}
&\|H\so(x,v\so)-H\st(x,v\st)\|_{l^2Y^s}\\
&\lesssim C(\|u_0\sk\|_{l^2H^s}\!,\!\|v\sk\|_{l^2X^s})\left(\|v\so\!-\!v\st\|_{l^2X^s}(\|v\so\|_{l^2X^s}\!+\!\|v\st\|_{l^2X^s}\!+\!2^{-k})\right.\notag\\
&\left.\quad+\|u_0\so-u_0\st\|_{l^2H^s}\left((\|v\so\|_{l^2X^s}+\|v\st\|_{l^2X^s})^2\notag\right.\right.\\
&\left.\left.\;\;\:\,+2^{-k}(\|v\so\|_{l^2X^s}+\|v\st\|_{l^2X^s})+2^{-\gamma k}\right)\right)\notag
\end{align}
\end{propn}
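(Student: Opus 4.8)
The plan is to treat the three groups of terms in $H=\sum_j H_j$ separately and to sum the frequency-localised pieces by almost orthogonality in $l^2Y^s$. Collecting the summands, $\sum_j H_j$ splits into the full good term $G(x,v,v_x,v_{xx})=\sum_j S_jG$, the sum of high-high remainders $\sum_j b_j$, and the sum of commutators $\sum_j[S_j,\partial_x a_{<j-4}]\partial_x^2 v$. Since each $b_j$ and each commutator is localised to frequency $\sim 2^j$, almost orthogonality reduces the problem to estimating the $l^2Y^s$ norm of each summand and verifying that the resulting series in $j$ converges.

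The good term needs no new work: it is precisely the object estimated in Proposition \ref{propn:niceRHSterms}, so \eqref{est:Gest} already supplies the entire right-hand side of \eqref{est:Hbnd}, including the polynomial factor $\langle\|v\|_{l^2X^s}\rangle^{m-2}$, the linear term $2^{-k}\|v\|_{l^2X^s}$, and the inhomogeneous contribution $2^{-\gamma k}\|u_0\|_{l^2H^s}$. For the high-high remainder I would apply \eqref{est:bilHH}. For the $c_2$ piece $S_j((S_{\geq j-4}\partial_x^2 v)^2)$ I take $\alpha=\beta=s-2$, so that $\|\partial_x^2 v\|_{l^2X^{s-2}}=\|v\|_{l^2X^s}$ and \eqref{est:bilHH} yields a factor $2^{(\tfrac92-s)j}$; for the $c_1$ piece $S_j(S_{\geq j-4}(\partial_x v)\,\partial_x^2 v)$ a further frequency splitting of $\partial_x^2 v$ reduces matters to the bilinear estimates \eqref{est:bilLH}--\eqref{est:bilHH} with $\alpha=s-1$, $\beta=s-2$, giving a factor $2^{(\tfrac72-s)j}$. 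Both exponents are negative in the relevant range, so the series in $j$ converges, while the prefactors $2^{-\lambda k}$ and $2^{(1-\lambda)k}$ are $\le 1$ because the definition of $\lambda$ forces $\lambda\ge0$ when $c_1\ne0$ and $\lambda\ge1$ when $c_2\ne0$. This produces a clean $\|v\|_{l^2X^s}^2$ bound for $b_j$.

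The commutator is the genuinely quasilinear term and the heart of the matter. Here I would invoke the commutator estimate \eqref{est:com}, which bounds each summand by $\|\partial_x a\|_{l^2X^{\sigma-1}}\|\tilde S_j v\|_{l^2X^s}$ for some $\sigma>\tfrac72$; summing the squares in $j$ gives $\|\partial_x a\|_{l^2X^{\sigma-1}}\|v\|_{l^2X^s}$. It then remains to control $\|\partial_x a\|_{l^2X^{\sigma-1}}$, which I split into its $v$-part and its $u_0^{(k)l}$-part. The $v$-part contributes $2^{-\lambda k}\|v\|_{l^2X^\sigma}+2^{(1-\lambda)k}\|v\|_{l^2X^{\sigma+1}}$; using $\lambda\ge0$, $\lambda\ge1$ together with $\sigma\le s-1$ this is $\lesssim\|v\|_{l^2X^s}$, yielding the quadratic term after multiplication by $\|v\|_{l^2X^s}$. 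The $u_0^{(k)l}$-part is handled through the low-frequency estimate \eqref{est:LFXH} and the rescaling bound \eqref{est:rescaledLF} with $r=1,2$, which generate genuine $2^{-k}$ smallness and hence the term $2^{-k}\|v\|_{l^2X^s}$ after absorbing $\|u_0\|$ into the constant. It is exactly this chain, needing both $\sigma>\tfrac72$ and $\sigma+1\le s$ to accommodate the two derivatives of $v$ entering $\partial_x a$ through the $c_2$ interaction, that pins down $s_0=\tfrac92$ in the worst ($u_{xx}^2$) case; in the remaining cases the weaker derivative count in $\partial_x a$ relaxes the requirement to the corresponding $s_0$ in the table.

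For the difference estimate \eqref{est:Hdiff} I would repeat this decomposition for $H\so-H\st$. The good-term difference is exactly \eqref{est:Gdiff}. Each high-high and commutator difference is handled by telescoping the relevant bilinear or coefficient pair: for the commutators one writes the difference as a commutator with coefficient $a\so-a\st$ acting on $\partial_x^2 v\so$ plus a commutator with coefficient $a\st$ acting on $\partial_x^2(v\so-v\st)$, using that $a\sk$ depends affinely on $v\sk$ and on the low-frequency rescaled data; the bilinear $b_j$ differences split the same way. This distributes the two differences $\|v\so-v\st\|_{l^2X^s}$ and $\|u_0\so-u_0\st\|_{l^2H^s}$ with the weights appearing in \eqref{est:Hdiff}. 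I expect the commutator to remain the main obstacle throughout: controlling $\partial_x a$, which already carries two derivatives of the solution via the $c_2$ term, in $l^2X^{\sigma-1}$ with $\sigma>\tfrac72$ while keeping the rescaling prefactors bounded is what simultaneously forces $s>s_0$ and encodes the quasilinear character of the bad quadratic interactions.
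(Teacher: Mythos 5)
Your proposal is correct and follows essentially the same route as the paper: the decomposition of \(H\) into the commutator, the high-high remainders \(b_j\), and the good term \(G\), handled respectively by the commutator estimate \eqref{est:com} together with the low-frequency bound \eqref{est:rescaledLF}, the frequency localised bilinear estimates \eqref{est:bilLH}--\eqref{est:bilHH} with \(\lambda\geq0\) (resp.\ \(\lambda\geq1\)) when \(c_1\neq0\) (resp.\ \(c_2\neq0\)), and Proposition \ref{propn:niceRHSterms}, with the difference bound obtained by the same telescoping/polynomial expansion. Your accounting of the exponents (the choice \(\sigma=s-1\) when \(c_2\neq0\), forcing \(\sigma>\tfrac{7}{2}\), i.e.\ \(s>\tfrac{9}{2}\) in the worst case) matches the paper's definition \eqref{defn:sigma} and simply makes explicit details the paper leaves implicit.
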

\begin{proof}
Define
\begin{equation}\label{defn:sigma}
\sigma=\left\{\begin{array}{ll}s&\quad\textrm{if }c_2=0\vspace{5pt}\\s-1&\quad\textrm{if }c_2\neq0\end{array}\right.
\end{equation}
Then, by the commutator estimate \eqref{est:com} and the low frequency estimate \eqref{est:rescaledLF} we have
\begin{align*}
\|[S_j,\partial_xa_{<j-4}]v_{xx}\|_{l^2Y^s}&\lesssim\|\partial_x a\|_{l^2X^{\sigma-1}}\|\tilde S_jv\|_{l^2X^s}\\
&\lesssim(2^{-k}\|u_0\|_{l^2H^s}+\|v\|_{l^2X^s})\|\tilde S_jv\|_{l^2X^s}
\end{align*}
From the frequency localised bilinear estimates \eqref{est:bilLH} and \eqref{est:bilHH} we have
\[
\|\sum\limits_{j\geq0}b_j(x,v)\|_{l^2Y^s}\lesssim\|v\|_{l^2X^s}^2
\]
where we have used that if \(c_1\neq0\) then \(\lambda\geq0\) and if \(c_2\neq0\) then \(\lambda\geq1\). The remaining terms in \eqref{est:Hbnd} can then be estimated using Proposition \ref{propn:niceRHSterms}.

For the estimate \eqref{est:Hdiff}, as in Proposition \ref{propn:niceRHSterms}, we write the difference \(H\so(x,v\so)-H\st(x,v\st)\) in terms of \((u_0\sk)^{(k)l}\), \(v\sk\),\((u_0\so)^{(k)l}-(u_0\st)^{(k)l}\), \(v\so-v\st\) and use the same estimates as for \eqref{est:Hbnd}.
\end{proof}
\vspace{15pt}

\subsection{The solution map}
Let \(\sigma\) be as in \eqref{defn:sigma}, \(\gamma\) as in \eqref{defn:gamma} and \(v_0=u_0^{(k)h}\). Define
\[
\xk=\{v\in l^2X^s:\|v\|_{l^2X^s}\leq2^{-\frac{1}{2}\gamma k}\|u_0\|_{l^2H^s}\}
\]
We note that if \(v\in\xk\), from the low frequency estimate \eqref{est:rescaledLF}
\[
\|a(v)\|_{l^2X^\sigma}\lesssim \|u_0\|_{l^2H^s}+\|v\|_{l^2X^s}\lesssim\|u_0\|_{l^2X^s}
\]
and
\[
\|\partial_xa(v)\|_{l^2X^{\sigma-1}}\lesssim 2^{-k}\|u_0\|_{l^2H^s}+\|v\|_{l^2X^s}\lesssim 2^{-\frac{1}{2}\gamma k}\|u_0\|_{l^2H^s}
\]
In particular, if \(\delta=\delta(s,\sigma,\|u_0\|_{l^2H^s})\) is as in Proposition \ref{propn:modairyparaEST}, for sufficiently large \(k\),
\[
\|\partial_xa(v)\|_{l^2X^{\sigma-1}}\leq\delta
\]
Suppose \(v\in\xk\) also satisfies
\begin{equation}\label{cond:paracondi}
\|T_{(\partial_t+\partial_x^3)a(v)}\|_{l^2X^s\rightarrow l^2Y^s}\leq\delta
\end{equation}
then by Proposition \ref{propn:modairyparaEST} we can find a solution \(w=\mathcal{T}(v)\) to the equation
\[
\pde{(\partial_t+\partial_x^3-T_{\partial_xa(v)}\partial_x^2)w=H(x,v)}{w(0)=v_0}
\]
satisfying
\begin{equation}\label{est:tvbound}
\|w\|_{l^2X^s}\lesssim C(\|u_0\|_{l^2X^s})(\|v_0\|_{l^2H^s}+\|H(x,v)\|_{l^2Y^s})
\end{equation}

\vspace{15pt}\begin{propn}
Suppose \(s>s_0\), \(v\in\xk\) satisfies \eqref{cond:paracondi} and \(w=\mathcal{T}(v)\), then for sufficiently large \(k\),

\begin{equation}\label{est:whyp1}
\|w\|_{l^2X^s}\leq 2^{-\frac{1}{2}\gamma k}\|u_0\|_{l^2H^s}
\end{equation}
and
\begin{equation}\label{est:smallparanorm}
\|T_{(\partial_t+\partial_x^2)a(w)}\|_{l^2X^s\rightarrow l^2Y^s}\leq2^{-\gamma k}C(\|u_0\|_{l^2H^s})
\end{equation}
So, for sufficiently large \(k\), \(w\in\xk\) and satisfies \eqref{cond:paracondi}.

Further, if \(v\sk\in\xk\sk\) satisfy \eqref{cond:paracondi} for \(\delta\sk\geq0\) and \(w\sk=\mathcal{T}\sk(v\sk)\) where \(\xk\sk,\mathcal{T}\sk\) correspond to initial data \(u_0\sk\) for \(i=1,2\), we have the following estimate for the difference
\begin{align}\label{est:paranormdiff}
\|T&_{(\partial_t+\partial_x^3)(a\so(w\so)-a\st(w\st))}\|_{l^2X^s\rightarrow l^2Y^s}\\
&\qquad\qquad\lesssim C(\|u_0\so\|_{l^2H^s},\|u_0\st\|_{l^2H^s})\left(2^{-\gamma k}\|u_0\so-u_0\st\|_{l^2H^s}+\right.\notag\\
&\qquad\qquad\quad\left.+2^{-\frac{1}{2}\gamma k}\|v\so-v\st\|_{l^2X^s}+2^{-\frac{1}{2}\gamma k}\|w\so-w\st\|_{l^2X^s}\right)\notag
\end{align}
\end{propn}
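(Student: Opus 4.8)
The plan is to verify the three estimates in turn, in each case reducing to the output bound \eqref{est:tvbound} for $\mathcal{T}$ and to the nonlinear bounds \eqref{est:Hbnd}, \eqref{est:Hdiff} for $H$, and extracting the decaying factor $2^{-\gamma k}$ from the rescaling prefactors together with the smallness of $v,w\in\xk$.

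For \eqref{est:whyp1} I would start from \eqref{est:tvbound}. The data term is controlled by the high frequency estimate \eqref{est:rescaledHF}: since Remark \ref{rk:lambda} gives $\lambda+\tfrac12\leq s_0\leq s$ and $\gamma=\min(1,s-\lambda-\tfrac12)$ by \eqref{defn:gamma}, we have $\lambda+\tfrac12-s\leq-\gamma$, hence $\|v_0\|_{l^2H^s}\lesssim2^{-\gamma k}\|u_0\|_{l^2H^s}$. The forcing term is controlled by \eqref{est:Hbnd}: using $\|v\|_{l^2X^s}\leq2^{-\frac12\gamma k}\|u_0\|_{l^2H^s}$, each of the three contributions on the right of \eqref{est:Hbnd} is at most $2^{-\gamma k}C(\|u_0\|_{l^2H^s})\|u_0\|_{l^2H^s}$ (the quadratic term squares the smallness factor, while $2^{-k}\|v\|_{l^2X^s}$ uses $2^{-k}\leq2^{-\frac12\gamma k}$ since $\gamma\leq1$). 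Substituting into \eqref{est:tvbound} gives $\|w\|_{l^2X^s}\lesssim C(\|u_0\|_{l^2H^s})2^{-\gamma k}\|u_0\|_{l^2H^s}$, and absorbing one factor $2^{-\frac12\gamma k}$ against the constant for $k$ large yields \eqref{est:whyp1}.

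The heart of the matter is \eqref{est:smallparanorm}, and the obstacle is that $(\partial_t+\partial_x^3)a(w)$ contains $\partial_t w$, which cannot be controlled in any spatial norm. The resolution is to eliminate the time derivative using the equation for $w$: since $u_0^{(k)l}$ is frozen in time, $\partial_t u_0^{(k)l}=0$, and
\[
(\partial_t+\partial_x^3)w=T_{\partial_xa(v)}\partial_x^2w+H(x,v),
\]
so that
\[
(\partial_t+\partial_x^3)a(w)=c_12^{-\lambda k}\bigl(\partial_x^3u_0^{(k)l}+T_{\partial_xa(v)}\partial_x^2w+H(x,v)\bigr)+c_22^{(1-\lambda)k}\partial_x\bigl(\partial_x^3u_0^{(k)l}+2T_{\partial_xa(v)}\partial_x^2w+2H(x,v)\bigr).
\]
I would then bound the paraproduct operator norm term by term. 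The inhomogeneous pieces $\partial_x^3u_0^{(k)l}$ (and $\partial_x^4u_0^{(k)l}$ from the $c_2$ term) are low frequency and are handled by \eqref{est:rescaledLF} and \eqref{est:LFXH}; combined with the prefactors $2^{-\lambda k},2^{(1-\lambda)k}$ and the sign constraints $\lambda\geq0$ when $c_1\neq0$, $\lambda\geq1$ when $c_2\neq0$, they produce a factor $2^{-\gamma k}$. The remaining pieces are products of the small frozen coefficient $\partial_xa(v)$ with $\partial_x^2w$, or the nonlinearity $H(x,v)$; here the two derivatives fall on the low-frequency factor inside the outer paraproduct, so bounding via the frequency localised bilinear and trilinear estimates \eqref{est:bilLH}, \eqref{est:bilHH}, \eqref{est:tri}, \eqref{est:triLHH} together with \eqref{est:Hbnd} and the smallness $\|\partial_xa(v)\|_{l^2X^{\sigma-1}},\|v\|_{l^2X^s},\|w\|_{l^2X^s}\lesssim2^{-\frac12\gamma k}\|u_0\|_{l^2H^s}$ again yields $2^{-\gamma k}C(\|u_0\|_{l^2H^s})$. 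The bookkeeping mirrors the proof of Proposition \ref{propn:Hest}, and the borderline case $c_2\neq0$, where $\sigma=s-1$ and the worst term carries the most derivatives, is exactly where the regularity $s>s_0$ (and $s>\tfrac92$ in the full case) is used.

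Finally, \eqref{est:paranormdiff} follows by running the same computation on the difference. Writing
\[
a\so(w\so)-a\st(w\st)=c_12^{-\lambda k}\bigl((u_0\so)^{(k)l}-(u_0\st)^{(k)l}+w\so-w\st\bigr)+c_22^{(1-\lambda)k}\partial_x\bigl((u_0\so)^{(k)l}-(u_0\st)^{(k)l}+2(w\so-w\st)\bigr)
\]
and substituting the equations for $w\so,w\st$, the difference $T_{\partial_xa\so(v\so)}\partial_x^2w\so-T_{\partial_xa\st(v\st)}\partial_x^2w\st$ is split telescopically into $T_{\partial_x(a\so(v\so)-a\st(v\st))}\partial_x^2w\so$ and $T_{\partial_xa\st(v\st)}\partial_x^2(w\so-w\st)$, while the nonlinear difference is controlled by \eqref{est:Hdiff}. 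Distributing the rescaling factors exactly as above, the $u_0$-difference picks up the full $2^{-\gamma k}$ through the low frequency estimate, whereas the $v$- and $w$-differences each retain a single smallness factor $2^{-\frac12\gamma k}$, producing precisely the three terms on the right of \eqref{est:paranormdiff}. The main obstacle throughout remains the elimination of $\partial_t$; once this substitution is made, the estimates are a direct, if lengthy, application of the multilinear bounds of Section \ref{sect:spaces}.
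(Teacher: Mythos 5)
Your proposal follows essentially the same route as the paper: \eqref{est:whyp1} from \eqref{est:tvbound} together with \eqref{est:rescaledHF} and the $H$ bound; the key idea of eliminating $\partial_t$ by substituting the paradifferential equation for $w$ into $(\partial_t+\partial_x^3)a(w)$ (using that $u_0^{(k)l}$ is frozen in time); and the telescoping $T_{\partial_xa\so(v\so)}\partial_x^2w\so-T_{\partial_xa\st(v\st)}\partial_x^2w\st=T_{\partial_x(a\so(v\so)-a\st(v\st))}\partial_x^2w\so+T_{\partial_xa\st(v\st)}\partial_x^2(w\so-w\st)$ for \eqref{est:paranormdiff}. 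This is exactly the paper's argument.

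One imprecision worth fixing: for the terms $S_{<j-4}H(x,v)\,S_jz$ you cite the bound \eqref{est:Hbnd}, but that is an $l^2Y^s$ estimate, and the low--high bilinear estimate \eqref{est:bilLH} requires the low-frequency coefficient in an $X$-type norm, namely $\|H(x,v)\|_{l^2X^{s-2}}$ (similarly $\|H\so-H\st\|_{l^2X^{s-2}}$ rather than \eqref{est:Hdiff} in the difference bound). This does not follow from \eqref{est:Hbnd}; the paper re-runs the proof of Proposition \ref{propn:Hest}, replacing the $Y$-output estimates \eqref{est:bilLH}, \eqref{est:bilHH} and the commutator estimate \eqref{est:com} by the $X$-output estimates \eqref{est:LHbilX}, \eqref{est:HHbilX}, and replacing \eqref{est:bil}, \eqref{est:tri} by the algebra estimate \eqref{est:alg} in the bound for $G$. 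In particular the commutator piece of $H_j$ has no $X$-norm bound available from \eqref{est:com} and must be handled by \eqref{est:LHbilX} directly. Your remark that ``the bookkeeping mirrors the proof of Proposition \ref{propn:Hest}'' points in the right direction, but the substitution of $X$-type inputs is a necessary step, not merely bookkeeping; with that correction the proposal matches the paper's proof.
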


\begin{proof}

From the high frequency estimate \eqref{est:rescaledHF} we have
\[
\|v_0\|_{l^2H^s}\lesssim 2^{-\gamma k}\|u_0\|_{l^2H^s}
\]
and from Proposition \ref{propn:Hest} and the hypothesis \(\|v\|_{l^2X^s}\lesssim 2^{-\frac{1}{2}\gamma k}\) we have
\[
\|H(x,v)\|_{l^2Y^s}\lesssim C(\|u_0\|_{l^2H^s})2^{-\gamma k}\|u_0\|_{l^2H^s}
\]
The estimate \eqref{est:whyp1} then follows from \eqref{est:tvbound} for sufficiently large \(k\).

To prove \eqref{est:smallparanorm} we take \(z\in l^2X^s\) and consider
\begin{align*}
\|(\partial_t&+\partial_x^3)(a_{<j-4}(w))S_jz\|_{l^2Y^s}\\
\lesssim&\;\|(c_12^{-\lambda k}+c_22^{(1-\lambda)k}\partial_x)(\partial_x^3u_0^{(k)l})S_jz\|_{l^2Y^s}\\
&\;+\|(c_12^{-\lambda k}+2c_22^{(1-\lambda)k}\partial_x)(S_{<j-4}H(x,v))S_jz\|_{l^2Y^s}\\
&\;+\|(c_12^{-\lambda k}+2c_22^{(1-\lambda)k}\partial_x)(S_{<j-4}(T_{\partial_xa(v)}\partial_x^2w))S_jz\|_{l^2Y^s}
\end{align*}
Using the frequency localised bilinear estimate \eqref{est:bilLH} we have
\begin{align*}
&\|(c_12^{-\lambda k}+c_22^{(1-\lambda)k}\partial_x)(\partial_x^3u_0^{(k)l})S_jz\|_{l^2Y^s}\\
&+\|(c_12^{-\lambda k}+2c_22^{(1-\lambda)k}\partial_x)(S_{<j-4}H(x,v))S_jz\|_{l^2Y^s}\\
&\;\lesssim(\|\partial_x^3u_0^{(k)l}\|_{l^2X^{s-2}}+\|S_{<j-4}H(x,v)\|_{l^2X^{s-2}})\|S_jz\|_{l^2X^s}
\end{align*}
From the low frequency estimate \eqref{est:rescaledLF} we have
\[
\|\partial_x^3u_0^{(k)l}\|_{l^2X^{s-2}}\lesssim2^{(\lambda-3)k}\|u_0\|_{l^2H^s}\lesssim2^{-k}\|u_0\|_{l^2H^s}
\]
Replacing the frequency localised bilinear estimates \eqref{est:bilLH} and \eqref{est:bilHH} and the commutator estimate \eqref{est:com} in Proposition \ref{propn:Hest} by the frequency localised bilinear estimates \eqref{est:LHbilX} and \eqref{est:HHbilX} we have
\[
\|H(x,v)\|_{l^2X^{s-2}}\lesssim(2^{-k}\|u_0\|_{l^2H^s}+\|v\|_{l^2X^s})\|v\|_{l^2X^s}+\|G(x,v)\|_{l^2X^{s-2}}
\]
Similarly, replacing the bilinear estimate \eqref{est:bil} and the trilinear estimate \eqref{est:tri} in Proposition \ref{propn:niceRHSterms} by the algebra estimate \eqref{est:alg} we have
\begin{align*}
\|G&(x,v)\|_{l^2X^{s-2}}\\&\lesssim C(\|u_0\|_{l^2H^s})\left(\|v\|_{l^2X^s}^2\langle\|v\|_{l^2X^s}\rangle^{m-2}+2^{-k}\|v\|_{l^2X^s}+2^{-\gamma k}\|u_0\|_{l^2H^s}\right)
\end{align*}
So combining these we have
\[
\|H(x,v)\|_{l^2X^{s-2}}\lesssim2^{-\gamma k}C(\|u_0\|_{l^2H^s})
\]
For the remaining term we use the frequency localised trilinear estimate \eqref{est:triLHH} with the estimate \eqref{est:whyp1} to get,
\begin{align*}
\|(c_12^{-\lambda k}+2c_22^{(1-\lambda)k}\partial_x)&(S_{<j-4}(T_{\partial_xa(v)}\partial_x^2w))S_jz\|_{l^2Y^s}\\
&\lesssim\|\partial_xa(v)\|_{l^2X^{\sigma-1}}\|w\|_{l^2X^s}\|S_jz\|_{l^2X^s}\\
&\lesssim2^{-\gamma k}\|u_0\|^2_{l^2H^s}\|S_jz\|_{l^2X^s}
\end{align*}

To prove \eqref{est:paranormdiff} we consider the difference
\begin{align*}
(\partial_t+\partial_x^3)&(a\so(v\so)-a\st(v\st))\\
&\qquad=(c_12^{-\lambda k}+c_22^{(1-\lambda)k}\partial_x)\left(\partial_x^3(u_0\so)^{(k)l}-\partial_x^3(u_0\st)^{(k)l}\right)\\
&\qquad\quad+(c_12^{-\lambda k}+2c_22^{(1-\lambda)k}\partial_x)\left(T_{\partial_xa\so(v\so)}\partial_x^2w\so\right.\\
&\qquad\quad\left.-T_{\partial_xa\st(v\st)}\partial_x^2w\st+H\so(x,v\so)-H\st(x,v\st)\right)
\end{align*}
As above we have
\[
\|\partial_x^3(u_0\so)^{(k)l}-\partial_x^3(u_0\st)^{(k)l}\|_{l^2X^{s-2}}\lesssim2^{-k}\|u_0\so-u_0\st\|_{l^2H^s}
\]
and as in Propositions \ref{propn:niceRHSterms}, \ref{propn:Hest} we can write the difference
\[
H\so(x,v\so)-H\st(x,v\st)
\]
as a polynomial in \((u_0\sk)^{(k)l}\), \(v\sk\), \((u_0\so)^{(k)l}-(u_0\st)^{(k)l}\), and \(v\so-v\st\) and apply the same estimates as for \eqref{est:smallparanorm} to get
\begin{align*}
&\|H\so(x,v\so)-H\st(x,v\st)\|_{l^2X^{s-2}}\\
&\lesssim C(\|u_0\sk\|_{l^2H^s}\!,\!\|v\sk\|_{l^2X^s})\left(\|v\so\!-\!v\st\|_{l^2X^s}(\|v\so\|_{l^2X^s}\!+\!\|v\st\|_{l^2X^s}\!+\!2^{-k})\right.\\
&\quad\left.+\|u_0\so-u_0\st\|_{l^2H^s}\left((\|v\so\|_{l^2X^s}+\|v\st\|_{l^2X^s})^2\notag\right.\right.\\
&\quad\left.\left.+2^{-k}(\|v\so\|_{l^2X^s}+\|v\st\|_{l^2X^s})+2^{-\gamma k}\right)\right)\\
&\lesssim C(\|u_0\sk\|_{l^2H^s})(2^{-\frac{1}{2}\gamma k}\|v\so-v\st\|_{l^2X^s}+2^{-\gamma k}\|u_0\so-u_0\st\|_{l^2H^s})
\end{align*}
For the remaining term we write
\begin{align*}
T_{\partial_xa\so(v\so)}\partial_x^2w\so-T_{\partial_xa\st(v\st)}\partial_x^2w\st&=T_{\partial_xa\so(v\so)-\partial_xa\st(v\st)}\partial_x^2w\so\\
&\quad+T_{\partial_xa\st(v\st)}\partial_x^2(w\so-w\st)
\end{align*}
Applying the frequency localised trilinear estimate \eqref{est:triLHH} as above we then have
\begin{align*}
\|S_{<j-4}&(T_{\partial_xa\so(v\so)}\partial_x^2w\so-T_{\partial_xa\st(v\st)}\partial_x^2w\st)S_jz\|_{l^2Y^s}\\
&\lesssim \|\partial_xa\so(v\so)-\partial_xa\st(v\st)\|_{l^2X^{\sigma-1}}\|w\so\|_{l^2X^s}\|z_j\|_{l^2X^s}\\
&\quad+\|\partial_xa\st(v\st)\|_{l^2X^{\sigma-1}}\|w\so-w\st\|_{l^2X^s}\|z_j\|_{l^2X^s}\\
&\lesssim(2^{-k}\|u_0\so\!-\!u_0\st\|_{l^2H^s}+\|v\so\!-\!v\st\|_{l^2X^s})2^{-\frac{1}{2}\gamma k}\|u_0\st\|_{l^2H^s}\|z_j\|_{l^2X^s}\\
&\quad+2^{-\frac{1}{2}\gamma k}\|u_0\st\|_{l^2H^s}\|w\so-w\st\|_{l^2X^s}\|z_j\|_{l^2X^s}
\end{align*}

\end{proof}
\vspace{15pt}

\subsection{Existence of a solution}
Let \(v\sm\equiv0\) and let
\[
v^{\snp}=\mathcal{T}(v\sn)
\]
Then \(v\sn\in\xk\) and satisfies \eqref{cond:paracondi} for \(n\geq0\). To estimate the difference \(v\snp-v\sn\) we use Proposition \ref{propn:modAiryparadiffs} to get
\begin{align*}
&\|v\snp-v\sn\|_{l^2X^s}\\
&\lesssim C(\|u_0\|_{l^2H^s})\left(\|H(v\sn)-H(v\snm)\|_{l^2Y^s}+\left(\|a(v\sn)-a(v\snm)\|_{l^2X^\sigma}\right.\right.\\
&\quad\left.\left.+\|T_{(\partial_t+\partial_x^3)(a(v\sn)-a(v\snm))}\|_{l^2X^s\rightarrow l^2Y^s}\right)\|H(v\sn)\|_{l^2Y^s}\!\right)
\end{align*}

From Proposition \ref{propn:Hest} we have
\begin{align*}
\|&H(v\sn)-H(v\snm)\|_{l^2Y^s}\\&\lesssim C(\|u_0\|_{l^2H^s},\|v\sn\|_{l^2X^s},\|v\snm\|_{l^2X^s})\|v\sn-v\snm\|_{l^2X^s}\left(\|v\sn\|_{l^2X^s}\right.\\
&\quad\left.+\|v\snm\|_{l^2X^s}+2^{-k}\right)\\
&\lesssim C(\|u_0\|_{l^2H^s})2^{-\frac{1}{2}\gamma k}\|v\sn-v\snm\|_{l^2X^s}
\end{align*}
We also have
\[
\|a(v\sn)-a(v\snm)\|_{l^2X^\sigma}\lesssim\|v\sn-v\snm\|_{l^2X^s}
\]

From \eqref{est:paranormdiff} we have
\begin{align*}
\|&T_{(\partial_t+\partial_x^3)(a(v\sn)-a(v\snm))}\|_{l^2X^\sigma\rightarrow l^2Y^\sigma}\\
&\lesssim C(\|u_0\|_{l^2H^s})(2^{-\frac{1}{2}\gamma k}\|v\snm-v\snmm\|_{l^2X^s}+2^{-\frac{1}{2}\gamma k}\|v\sn-v\snm\|_{l^2X^s})
\end{align*}
So
\begin{align*}
\|&v\snp-v\sn\|_{l^2X^s}\\
&\lesssim C(\|u_0\|_{l^2H^s})(2^{-\frac{1}{2}\gamma k}\|v\sn-v\snm\|_{l^2X^s}+2^{-\frac{1}{2}\gamma k}\|v\snm-v\snmm\|_{l^2X^s})
\end{align*}
So provided \(k\) is sufficiently large, the sequence converges to a solution \linebreak \(v\in\xk\) to \eqref{eq:veqn}. Adding the low frequency component of the initial data \(u_0^{(k)l}\) and rescaling we get a solution \(u\in C([0,2^{-3k}],H^s)\) to \eqref{eq:pde}.

\subsection{Uniqueness and Lipschitz dependence on initial data}

Suppose we have solutions \(u\sk\) for \(i=1,2\) to \eqref{eq:pde} corresponding to initial data \(u_0\sk\). After rescaling and subtracting the low frequency component we have \(v\sk\) satisfying
\begin{equation}\label{est:rszbd}
\|v\sk\|_{l^2X^s}\leq 2^{-\tfrac{1}{2}\gamma k}\|u_0\sk\|_{l^2H^s}
\end{equation}
and
\begin{equation}
\pde{(\partial_t+\partial_x^3-T_{\partial_xa\sk(v\sk)}\partial_x^2)v\sk=H\sk(v\sk)}{v\sk(0)=v_0\sk}
\end{equation}

Applying Proposition \ref{propn:modAiryparadiffs} we have the estimate
\begin{align*}
&\|v\so-v\st\|_{l^2X^s}\\
&\lesssim\!\!C(\|u_0\so\|_{l^2X^s}\!,\!\|u_0\st\|_{l^2X^s})\!\left(\!\|v_0\so\!\!-\!v_0\st\|_{l^2H^s}\!+\!\|H\so(v\so)\!-\!H\st(v\st)\|_{l^2Y^s}\right.\\
&\left.\;\;+(\|a\so(v\so)-a\st(v\st)\|_{l^2X^\sigma}\right.\\
&\left.\;\;+\|T_{(\partial_t+\partial_x^3)(a\so(v\so)-a\st(v\st))}\|_{l^2X^s\rightarrow l^2Y^s})(\|v_0\so\|_{l^2H^s}\!+\!\|H\so(v\so)\|_{l^2Y^s})\!\right)
\end{align*}
From the estimate \eqref{est:Hdiff} we have
\begin{align*}
&\|H\so(v\so)-H\st(v\st)\|_{l^2Y^s}\\
&\lesssim C(\|u_0\so\|_{l^2H^s},\|u_0\st\|_{l^2H^s})\left(\|v\so\!-\!v\st\|_{l^2X^s}(\|v\so\|_{l^2X^s}\!+\!\|v\st\|_{l^2X^s}\!+\!2^{-k})\right.\\
&\left.\quad+\|u_0\so-u_0\st\|_{l^2H^s}\left((\|v\so\|_{l^2X^s}+\|v\st\|_{l^2X^s})^2\notag\right.\right.\\
&\left.\left.\;\;\:\,+2^{-k}(\|v\so\|_{l^2X^s}+\|v\st\|_{l^2X^s})+2^{-\gamma k}\right)\right)\notag\\
&\lesssim C(\|u_0\so\|_{l^2H^s},\|u_0\st\|_{l^2H^s})\left(2^{-\gamma k}\|u_0\so-u_0\st\|_{l^2X^s}+2^{-\frac{1}{2}\gamma k}\|v\so-v\st\|_{l^2X^s}\right)
\end{align*}
We have
\begin{align*}
\|&a\so(v\so)-a\st(v\st)\|_{l^2X^\sigma}\\
&\lesssim C(\|u_0\so\|_{l^2H^s},\|u_0\st\|_{l^2H^s})(\|u_0\so-u_0\st\|_{l^2H^s}+\|v\so-v\st\|_{l^2X^s})
\end{align*}
and from the estimate \eqref{est:paranormdiff},
\begin{align*}
&\|T_{(\partial_t+\partial_x^3)(a\so(v\so)-a\st(v\st))}\|_{l^2X^s\rightarrow l^2Y^s}\\
&\lesssim C(\|u_0\so\|_{l^2H^s},\|u_0\st\|_{l^2H^s})\left(2^{-\gamma k}\|u_0\so-u_0\st\|_{l^2X^s}+2^{-\frac{1}{2}\gamma k}\|v\so-v\st\|_{l^2X^s}\right)
\end{align*}

We then have the estimate
\begin{align*}
\|&v\so-v\st\|_{l^2X^s}\\
&\lesssim C(\|u_0\so\|_{l^2H^s},\|u_0\st\|_{l^2H^s})(2^{-\gamma k}\|u_0\so-u_0\st\|_{l^2H^s}+2^{-\frac{1}{2}\gamma k}\|v\so-v\st\|_{l^2X^s})
\end{align*}
and so sufficiently large \(k\) we have
\begin{align*}
\|u\so-u\st\|_{l^2X^s}&\lesssim\|u_0\so-u_0\st\|_{l^2H^s}+\|v\so-v\st\|_{l^2X^s}\\
&\lesssim\|u_0\so-u_0\st\|_{l^2X^s}
\end{align*}
so the solution map is locally Lipschitz.

\section*{Acknowledgements}The author would like to thank his advisor, Daniel Tataru, for suggesting the problem, his guidance and several suggestions for the proof.

\bibliographystyle{amsplain}

\end{document}